\def\dom{\mathop{\mathrm{Dom}}\nolimits}
\def\im{\mathop{\mathrm{Im}}\nolimits}
\def\N{\mathbb N}
\def\POI{\mathcal{POI}} 
\def\PO{\mathcal{PO}}
\def\I{\mathcal{I}} 
\def\G{\mathcal{G}} 
\def\R{\mathscr{R}}
\def\L{\mathscr{L}}
\def\J{\mathscr{J}}
\def\H{\mathscr{H}}
\def\Sym{\mathcal{S}} 
\def\PT{\mathcal{PT}}
\def\T{\mathcal{T}}
\def\Ord{\mathcal{O}}
\def\pv#1{\ensuremath{{\sf#1}}}
\def\no{\sqsubseteq} 
\newtheorem{theorem}{Theorem}[section]
\newtheorem{proposition}[theorem]{Proposition}
\newtheorem{corollary}[theorem]{Corollary}
\newtheorem{lemma}[theorem]{Lemma}
\newcommand{\qed}{{\unskip\nobreak
\hfil\penalty50\hskip .001pt \hbox{}
          \nobreak\hfil
          \vrule height 1.2ex width 1.1ex depth -.1ex
           \parfillskip=0pt\finalhyphendemerits=0\medbreak}}
\newenvironment{proof}{
            \begin{trivlist}\item[\hskip%
            \labelsep{\bf \noindent Proof.}]
              }
              {
            \hfill\qed\rm\end{trivlist}
              }
\newcommand{\lastpage}{\addresss}
\newcommand{\addresss}{\small \sf  
\noindent{\sc Rita Caneco}, 
Departamento de Matem\'atica, 
Faculdade de Ci\^encias e Tecnologia, 
Universidade Nova de Lisboa, 
Monte da Caparica, 
2829-516 Caparica, 
Portugal; 
e-mail: r.caneco@campus.fct.unl.pt

\medskip 

\noindent{\sc V\'\i tor H. Fernandes}, 
Departamento de Matem\'atica, 
Faculdade de Ci\^encias e Tecnologia, 
Universidade Nova de Lisboa, 
Monte da Caparica, 
2829-516 Caparica, 
Portugal; 
also: 
Centro de Matem\'atica e Aplica\c{c}\~oes, 
Faculdade de Ci\^encias e Tecnologia, 
Universidade Nova de Lisboa, 
Monte da Caparica, 
2829-516 Caparica, 
Portugal;
e-mail: vhf@fct.unl.pt

\medskip

\noindent{\sc Teresa M. Quinteiro}, 
Instituto Superior de Engenharia de Lisboa, 
Rua Conselheiro Em\'\i dio Navarro 1, 
1950-062 Lisboa, 
Portugal; 
also: 
Centro de Matem\'atica e Aplica\c{c}\~oes, 
Faculdade de Ci\^encias e Tecnologia, 
Universidade Nova de Lisboa, 
Monte da Caparica, 
2829-516 Caparica, 
Portugal;
e-mail: tmelo@adm.isel.pt 
}
\title{Ranks and presentations of some normally ordered inverse semigroups}
\author{Rita Caneco, V\'\i tor H. Fernandes\footnote{This work was partially supported by the  Funda\c{c}\~ao para a Ci\^encia e a Tecnologia (Portuguese Foundation for Science and Technology) through the project UID/MAT/00297/2019 (Centro de Matem\'atica e Aplica\c{c}\~oes), and of Departamento de Matem\'atica da Faculdade de Ci\^encias e Tecnologia da Universidade Nova de Lisboa.}~ and 
Teresa M. Quinteiro\footnote{This work was partially supported by the  Funda\c{c}\~ao para a Ci\^encia e a Tecnologia (Portuguese Foundation for Science and Technology) through the project UID/MAT/00297/2019 (Centro de Matem\'atica e Aplica\c{c}\~oes), and of \'Area Departamental Matem\'atica do  Instituto Superior de Engenharia de Lisboa.}
}
\begin{document}

\maketitle

\begin{abstract}
In this paper we compute the rank and exhibit a presentation for the monoids of all $P$-stable and $P$-order preserving partial permutations on a finite set $\Omega$, with $P$ an ordered uniform partition of $\Omega$. These (inverse) semigroups constitute a natural class of generators of the pseudovariety of inverse semigroups  $\pv{NO}$ of all normally ordered (finite) inverse semigroups. 
\end{abstract}

\medskip

\noindent{\small 2010 \it Mathematics subject classification: \rm 20M20, 20M05, 20M07.} 

\noindent{\small\it Keywords: \rm transformations, normally ordered inverse semigroups, ranks,  presentations.} 

\section{Introduction and preliminaries} 

Let $\Omega$ be a set. We denote by $\PT(\Omega)$ the monoid (under composition) of all 
partial transformations on $\Omega$, by $\T(\Omega)$ the submonoid of $\PT(\Omega)$ of all 
full transformations on $\Omega$, by $\I(\Omega)$ 
the \textit{symmetric inverse semigroup} on $\Omega$, i.e. 
the inverse submonoid of $\PT(\Omega)$ of all 
partial permutations on $\Omega$, 
and by $\Sym(\Omega)$ the \textit{symmetric group} on $\Omega$, 
i.e. the subgroup of $\PT(\Omega)$ of all 
permutations on $\Omega$. 
If $\Omega$ is a finite set with $n$ elements ($n\in\N$), 
say $\Omega=\Omega_n=\{1,\ldots,n\}$, we denote 
$\PT(\Omega)$, $\T(\Omega)$, $\I(\Omega)$ and $\Sym(\Omega)$ simply by $\PT_n$, $\T_n$, $\I_n$ and $\Sym_n$, respectively. 

Now, consider a linear order $\le$ on $\Omega_n$, e.g the usual order.  We say that a transformation $\alpha\in\PT_n$ is 
\textit{order preserving} if $x\leq y$ implies $x\alpha\leq y\alpha$, for all $x,y \in \dom(\alpha)$. 
Denote by $\PO_n$ the submonoid of $\PT_n$ of all order preserving partial transformations, 
by $\Ord_n$ the submonoid of $\T_n$ of all
order preserving full transformations of $\Omega_n$ and by $\POI_n$ the inverse submonoid of $\I_n$ of all order preserving partial permutations of $\Omega_n$. 

\smallskip

A pseudovariety of [inverse] semigroups is a class of finite [inverse] semigroups 
closed under homomorphic images of [inverse] subsemigroups and finitary direct products. 

\smallskip

In the ``Szeged International Semigroup Colloquium" (1987) 
J.-E. Pin asked for an {\it effective} description of the pseudovariety 
(i.e. an algorithm to decide whether or not a finite 
semigroup belongs to the pseudovariety) of semigroups $\pv{O}$ generated 
by the semigroups $\Ord_n$, with $n\in\N$. 
Despite, as far as we know, this question is still open, some progresses have been made. 
First, Higgins \cite{Higgins:1995} proved 
that $\pv{O}$ is self-dual and does not contain all 
$\mathscr{R}$-trivial semigroups (and so $\pv{O}$ is properly 
contained in $\pv{A}$, the pseudovariety 
of all finite aperiodic semigroups), 
although every finite band belongs to $\pv{O}$. 
Next, Vernitskii and Volkov \cite{Vernitskii&Volkov:1995} generalized Higgins's result by showing that every 
finite semigroup whose idempotents form an ideal is in $\pv{O}$ and, 
in \cite{Fernandes:1997}, Fernandes proved that the pseudovariety 
of semigroups $\pv{POI}$ generated by the semigroups 
$\POI_n$, with $n\in\N$, is a (proper) subpseudovariety of $\pv{O}$. 
On the other hand, Almeida and Volkov \cite{Almeida&Volkov:1998} showed that 
the interval $[\pv{O}, \pv{A}]$ of the lattice of all pseudovarieties 
of semigroups has the cardinality of the continuum and 
Repnitski\u{\i} and Volkov \cite{Repnitskii&Volkov:1998} proved 
that $\pv{O}$ is not finitely based. 
Another contribution to the resolution of 
Pin's problem was given by Fernandes \cite{Fernandes:2002} who  
showed that $\pv{O}$ contains all semidirect products 
of a chain (considered as a semilattice) by  
a semigroup of injective order preserving 
partial transformations on a finite chain. 
This result was later generalized by Fernandes and Volkov \cite{Fernandes&Volkov:2010} 
for semidirect products of a chain by any semigroup from $\pv{O}$. 

\smallskip 

The inverse counterpart of Pin's problem can be formulated by asking for 
an effective description of the pseudovariety of inverse semigroups 
\pv{PCS} generated by $\{\POI_n\mid n\in\N\}$.  
In \cite{Cowan&Reilly:1995} Cowan and Reilly 
proved that $\pv{PCS}$ is properly contained in $\pv{A}\cap\pv{Inv}$ (being $\pv{Inv}$ the class of all inverse semigroups) 
and also that the interval $[\pv{PCS}, \pv{A}\cap\pv{Inv}]$
of the lattice of all pseudovarieties of inverse semigroups has the
cardinality of the continuum. From Cowan and Reilly's results
it can be deduced that a finite inverse semigroup with $n$ elements
belongs to $\pv{PCS}$ if and only if it can be embedded into the
semigroup $\POI_n$. This is in fact an effective description of
$\pv{PCS}$. On the other hand, in \cite{Fernandes:1998} Fernandes 
introduced the class $\pv{NO}$ of all normally ordered inverse semigroups. 
This notion is deeply related with the Munn representation of an inverse semigroup $M$, an 
idempotent-separating homomorphism that may be defined by   
$$
\begin{array}{rccccccc}
\phi: & M & \rightarrow & \I(E)&&&\\
        & s & \mapsto & \phi_s:&Ess^{-1}&\rightarrow&Es^{-1}s\\
         &&&                    & e & \mapsto &s^{-1}es~,
\end{array}
$$
with $E$ the semilattice of all idempotents of $M$. 
A finite inverse semigroup $M$ is said to be \textit{normally ordered} if 
there exists a linear order $\no$ in the semilattice $E$ of the idempotents of $M$ 
preserved by all partial permutations $\phi_s$
(i.e. for $e, f \in Ess^{-1}$, $e\no f$ implies $e\phi_s\no f\phi_s$), with $s\in M$. 
It was proved in \cite{Fernandes:1998} that $\pv{NO}$ is a
pseudovariety of inverse semigroups and also that 
the class of all fundamental normally ordered inverse semigroups 
consists of all aperiodic normally ordered inverse semigroups. 
Moreover, Fernandes showed that $\pv{PCS}=\pv{NO}\cap\pv{A}$, 
giving in this way a Cowan and Reilly 
alternative (effective) description of $\pv{PCS}$.  
In fact, this also led Fernandes \cite{Fernandes:1998} to 
the following refinement of Cowan and Reilly's description of $\pv{PCS}$: 
a finite inverse semigroup with $n$ {\it idempotents} belongs
to $\pv{PCS}$ if and only if it can be embedded into $\POI_n$. 
Another refinement (in fact, the best possible) was also given by Fernandes \cite{Fernandes:2008}, 
by considering only join irreducible idempotents. 
Notice that, in \cite{Fernandes:1998} it was also proved that $\pv{NO}=\pv{PCS}\vee\pv{G}$ 
(the join of $\pv{PCS}$ and $\pv{G}$, the pseudovariety of all groups). 

\medskip 

Now, let $\Omega$ be a finite set. 

An \textit{ordered partition} of $\Omega$ is a partition of $\Omega$ endowed with a linear order. 
By convention, whenever we take an ordered partition $P=\{X_i\}_{i=1,\dots,k}$ of $\Omega$, we will assume that $P$ is the chain 
$\{X_1<X_2<\cdots <X_k\}$. 
We say that $P=\{X_i\}_{i=1,\dots,k}$ is an \textit{uniform partition} of $\Omega$ if $|X_i|=|X_j|$ for all $i,j\in\{1,\ldots,k\}$.

Let $P=\{X_i\}_{i=1,\dots,k}$ be an ordered partition of $\Omega$ and, for each $x\in \Omega$, denote by $i_x$ the integer $i\in \{1,\ldots,k\}$ such that $x\in X_i$. Let $\alpha$ be a partial transformation on $\Omega$. We say that $\alpha$ is:
\begin{description}
\item $P$-\textit{stable} if $X_{i_x}\subseteq \dom(\alpha)$ and $X_{i_x}\alpha=X_{i_{x\alpha}}$, for all $x\in \dom(\alpha)$; and 
\item $P$-\textit{order preserving} if $i_x\le i_y$ implies $i_{x\alpha}\le i_{y\alpha}$, for all $x,y\in \dom(\alpha)$
\end{description}
(where $\le$ denotes the usual order on $\{1,\ldots,k\}$). 

Denote by $\POI_{\Omega,P}$ the set of all $P$-stable and  $P$-order preserving partial permutations on $\Omega$. 

Notice that, the identity mapping belongs to $\POI_{\Omega,P}$ and it is easy to check that $\POI_{\Omega,P}$ is an inverse submonoid of $\I(\Omega)$. 
Observe also that if $P$ is the trivial partition of $\Omega_n$, i.e. $P=\{\{i\}\}_{i=1,\ldots,n}$, then $\POI_{\Omega_n,P}$ coincides with $\POI_n$ and, 
on the other hand, if $P=\{\Omega_n\}$ (the universal partition of $\Omega_n$) then $\POI_{\Omega_n,P}$ is exactly the symmetric group $\Sym_n$, for $n\in\N$. 

These monoids, considered for the first time by Fernandes in \cite{Fernandes:1998}, were inspired by the work of Almeida and Higgins 
\cite{Almeida&Higgins:1997}, despite they are quite different from the ones considered by these two last authors. 
The main relevance of the monoids $\POI_{\Omega,P}$ lies 
in the fact that they constitute a family of generators of the pseudovariety $\pv{NO}$ 
of inverse semigroups. More precisely, Fernandes proved in \cite[Theorem 4.4]{Fernandes:1998} that $\pv{NO}$ is the class of all inverse subsemigroups (up to an isomorphism) of semigroups of the form $\POI_{\Omega,P}$.  
In fact, by the proof of \cite[Theorem 4.4]{Fernandes:1998}, it is clear that it suffices to consider semigroups of the form 
$\POI_{\Omega,P}$, with $P$ a uniform partition of $\Omega$, i.e. we also have the following result: 

\begin{theorem} \label{old} 
The class $\pv{NO}$ is the pseudovariety of inverse semigroups generated by all semigroups of the form $\POI_{\Omega,P}$, 
where $\Omega$ is a finite set and $P$ is an ordered uniform partition of $\Omega$. 
\end{theorem}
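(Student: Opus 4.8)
The plan is to let $\mathbf{V}$ denote the pseudovariety of inverse semigroups generated by all $\POI_{\Omega,P}$ with $\Omega$ finite and $P$ an ordered uniform partition, and to establish the two inclusions $\mathbf{V}\subseteq\pv{NO}$ and $\pv{NO}\subseteq\mathbf{V}$ separately.

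For $\mathbf{V}\subseteq\pv{NO}$, since $\pv{NO}$ is a pseudovariety it is enough to check that a single $\POI_{\Omega,P}$, with $P=\{X_1<\cdots<X_k\}$ uniform, is normally ordered. First I would describe its idempotents: a partial identity $\mathrm{id}_A$ is $P$-stable precisely when $A$ is a union of blocks, so the semilattice $E$ of idempotents is order-isomorphic to $2^{\{1,\dots,k\}}$ through $\mathrm{id}_A\mapsto\{i\mid X_i\subseteq A\}$. Next I would compute the Munn maps: using $P$-stability and $P$-order preservation, each $s$ induces an order-preserving partial bijection $\bar s$ of $\{1,\dots,k\}$ on the blocks, and $\phi_s$ acts on index sets by $I\mapsto I\bar s$. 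This exhibits $E$ together with its family of Munn maps as a copy of the idempotent semilattice of $\POI_k$ together with its Munn maps. Since $\POI_k$ belongs to $\pv{PCS}\subseteq\pv{NO}$, it is normally ordered, and the linear order on $2^{\{1,\dots,k\}}$ witnessing this is preserved by every $\bar s$, hence by every $\phi_s$; so $\POI_{\Omega,P}$ is normally ordered as well.

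For $\pv{NO}\subseteq\mathbf{V}$, I would invoke the identity $\pv{NO}=\pv{PCS}\vee\pv{G}$ recalled above, together with the fact that the join is the least pseudovariety containing both factors. It then suffices to place $\pv{PCS}$ and $\pv{G}$ inside $\mathbf{V}$. For the first, $\pv{PCS}$ is generated by the $\POI_n$, and each $\POI_n$ is $\POI_{\Omega_n,P}$ for the (uniform) trivial partition $P=\{\{i\}\}_{i=1,\dots,n}$. For the second, $\pv{G}$ is generated by the symmetric groups $\Sym_n$, and each $\Sym_n$ is $\POI_{\Omega_n,P}$ for the (uniform) universal partition $P=\{\Omega_n\}$. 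Hence both $\pv{PCS}$ and $\pv{G}$ lie in $\mathbf{V}$, and therefore so does their join $\pv{NO}$.

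I expect the only genuine work to be in the first inclusion, and specifically in the bookkeeping that matches the idempotent semilattice and the Munn action of $\POI_{\Omega,P}$ with those of $\POI_k$; once that reduction is in place, normal orderability is inherited from $\POI_k$ with no further effort. The alternative, closer in spirit to the cited proof of \cite[Theorem 4.4]{Fernandes:1998}, would be to embed an arbitrary $M\in\pv{NO}$ into some $\POI_{\Omega,P}$ and then pad the blocks to a common size so as to make $P$ uniform; the delicate point there would be verifying that the padding preserves the embedding, which is why I would prefer the join-based argument.
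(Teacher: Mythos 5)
Your proof is correct, but it takes a genuinely different route from the paper's. The paper offers no two-inclusion argument at all: it cites \cite[Theorem 4.4]{Fernandes:1998}, by which $\pv{NO}$ is precisely the class of all inverse subsemigroups of monoids $\POI_{\Omega,P}$, and then remarks that the embedding constructed in the \emph{proof} of that theorem already lands in a $\POI_{\Omega,P}$ with $P$ uniform; in other words, the paper's justification is exactly a white-box appeal to the ``padding'' argument you deliberately avoided. Your argument, by contrast, uses only the \emph{statements} of results quoted in the introduction: for $\mathbf{V}\subseteq\pv{NO}$ you check directly that each $\POI_{\Omega,P}$ is normally ordered --- its idempotents are the partial identities on unions of blocks, hence a copy of the idempotent semilattice of $\POI_k$, and each Munn map $\phi_s$ acts as the Munn map of the induced $\bar s\in\POI_k$, so a normal order for $\POI_k$ (available since $\pv{PCS}=\pv{NO}\cap\pv{A}\subseteq\pv{NO}$) transports to $\POI_{\Omega,P}$; for $\pv{NO}\subseteq\mathbf{V}$ you combine $\pv{NO}=\pv{PCS}\vee\pv{G}$ with the observations that the trivial and universal partitions (both uniform) realize $\POI_n$ and $\Sym_n$, Cayley's theorem handling $\pv{G}$. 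Both halves are sound; the one phrase I would tighten is the claim that this exhibits the Munn maps of $\POI_{\Omega,P}$ as ``a copy of'' those of $\POI_k$: all you use (and all you need) is that they form a \emph{subfamily} of the Munn maps of $\POI_k$, since asserting equality of the two families would require surjectivity of $s\mapsto\bar s$, which holds for uniform $P$ but is irrelevant here --- indeed your first inclusion works verbatim for arbitrary ordered partitions. As for what each approach buys: yours is self-contained modulo black-box citations and proves slightly more in the easy direction; the paper's, at the price of sending the reader into the proof in \cite{Fernandes:1998}, retains the stronger conclusion that every normally ordered inverse semigroup embeds into a \emph{single} $\POI_{\Omega,P}$ with $P$ uniform (no homomorphic images or direct products needed), whereas the join-based argument yields only generation --- which is, however, all the theorem asserts.
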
 

\medskip 

Let $n\in\N$. An \textit{ordered partition} of $n$ is a non-empty sequence of positive integers whose elements sum to $n$. 

Let $\pi=(n_1,\ldots,n_k)$ be an ordered partition of $n$. Define the ordered partition $P_{\pi}$ of $\Omega_n$ as being the partition into intervals 
$P_{\pi}=\{I_i\}_{i=1,\dots,k}$ of $\Omega_n$ (endowed with the usual order), where
$$
I_1=\{1,\dots, n_1\} \quad\text{and}\quad I_i = \{ n_1+\cdots+n_{i-1}+1,\ldots,n_1+\cdots+n_i\}, ~\mbox{for $2\le i\le k$.}
$$
Notice that $\pi=(|I_1|,\ldots,|I_k|)$. 

Next, we show that $\Omega_n$ and its partitions into intervals allow us to construct, up to an isomorphism, 
all monoids of type $\POI_{\Omega,P}$, with $\Omega$ a set with $n$ elements and $P$ and ordered partition of $\Omega$. 

\begin{theorem}\label{iso} 
Let $\Omega$ be a set with $n$ elements and let $P=\{X_i\}_{i=1,\dots,k}$ be an ordered partition of $\Omega$. Then, 
being $\pi$ the ordered partition $(|X_1|,\ldots,|X_k|)$ of $n$, 
the monoids $\POI_{\Omega,P}$ and $\POI_{\Omega_n,P_{\pi}}$ are isomorphic. 
\end{theorem}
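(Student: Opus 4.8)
The plan is to realise the isomorphism by transport of structure along a block-respecting bijection of the underlying sets. First I would fix a bijection $\sigma\colon\Omega\to\Omega_n$ carrying each block $X_i$ of $P$ onto the corresponding interval $I_i$ of $P_\pi$; such a $\sigma$ exists because $|X_i|=n_i=|I_i|$ for every $i$, and inside each block it may be chosen arbitrarily, since membership in $\POI_{\Omega,P}$ depends only on the block to which an element belongs and on the linear order of the blocks, not on any order internal to a block. The crucial property of $\sigma$, to be used repeatedly, is that it preserves block indices: writing $i_x$ for the index of the $P$-block containing $x\in\Omega$ and $j_y$ for the index of the $P_\pi$-block containing $y\in\Omega_n$, we have $X_i\sigma=I_i$ and hence $i_x=j_{x\sigma}$ for all $x\in\Omega$, and symmetrically $j_y=i_{y\sigma^{-1}}$ for all $y\in\Omega_n$.

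Next I would define $\Phi\colon\POI_{\Omega,P}\to\POI_{\Omega_n,P_\pi}$ by conjugation, $\alpha\Phi=\sigma^{-1}\alpha\sigma$, and verify it is a monoid isomorphism. As $\sigma$ is a total bijection, $\sigma\sigma^{-1}=\mathrm{id}_\Omega$ and $\sigma^{-1}\sigma=\mathrm{id}_{\Omega_n}$, so $\Phi$ sends $\mathrm{id}_\Omega$ to $\mathrm{id}_{\Omega_n}$ and satisfies $(\alpha\beta)\Phi=(\alpha\Phi)(\beta\Phi)$ for all $\alpha,\beta$; bijectivity is likewise immediate, the inverse being $\beta\mapsto\sigma\beta\sigma^{-1}$. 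The only genuine content is therefore well-definedness: that $\beta:=\sigma^{-1}\alpha\sigma$ lies in $\POI_{\Omega_n,P_\pi}$ whenever $\alpha\in\POI_{\Omega,P}$, together with the symmetric statement needed for the inverse map.

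For this I would translate the two defining conditions across $\sigma$ using index preservation. Observing that $\dom(\beta)=\dom(\alpha)\sigma$ and that $y\beta=(y\sigma^{-1})\alpha\sigma$ for $y\in\dom(\beta)$, the identities $j_y=i_{y\sigma^{-1}}$ and $X_i\sigma=I_i$ convert $P$-stability of $\alpha$ into $P_\pi$-stability of $\beta$: the block $I_{j_y}=X_{i_{y\sigma^{-1}}}\sigma$ lies in $\dom(\beta)$, and $I_{j_y}\beta=(X_{i_{y\sigma^{-1}}}\alpha)\sigma=I_{j_{y\beta}}$ by $P$-stability followed by $\sigma$. Similarly, $P$-order preservation of $\alpha$ yields $P_\pi$-order preservation of $\beta$, because $j_y\le j_z$ is equivalent to $i_{y\sigma^{-1}}\le i_{z\sigma^{-1}}$ and $j_{y\beta}\le j_{z\beta}$ to $i_{(y\sigma^{-1})\alpha}\le i_{(z\sigma^{-1})\alpha}$. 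Running the same computation with $\sigma^{-1}$ in place of $\sigma$ gives well-definedness of $\beta\mapsto\sigma\beta\sigma^{-1}$, completing the argument.

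I do not expect a serious obstacle: the proof is bookkeeping in the block indices, and the single point requiring care is to keep the two index functions $i_{(-)}$ (for $P$) and $j_{(-)}$ (for $P_\pi$) distinct and to apply $i_x=j_{x\sigma}$ consistently. The conceptual reason the statement holds is precisely that membership in $\POI_{\Omega,P}$ is insensitive to the internal structure of the blocks, so that only the block sizes and their linear order — exactly the data recorded by $\pi$ — can influence the monoid up to isomorphism.
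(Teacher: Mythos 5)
Your proposal is correct and follows essentially the same route as the paper's own proof: fix a block-respecting bijection $\sigma\colon\Omega\to\Omega_n$ with $X_i\sigma=I_i$, conjugate by $\sigma$, and check well-definedness via the index identities $i_x=j_{x\sigma}$ and $j_y=i_{y\sigma^{-1}}$, with the inverse map given by conjugation by $\sigma^{-1}$. The only differences are cosmetic: you spell out the $P_\pi$-stability verification that the paper dismisses as clear, and the paper writes out the order-preservation computation in slightly more detail.
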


\begin{proof}
Let $\pi=(n_1,\ldots,n_k)$ and $P_{\pi}=\{I_i\}_{i=1,\dots,k}$. Then $|X_i|=|I_i|$, for all $i\in\{1,\ldots,k\}$, and so we may consider a bijection 
$\sigma:\Omega\longrightarrow\Omega_n$ such that $X_i\sigma=I_i$, for all $i\in\{1,\ldots,k\}$. Hence, it is clear that the mapping 
$$
\begin{array}{rccc}
\Psi:&\POI_{\Omega,P}&\longrightarrow&\I_n\\
   & \alpha &\longmapsto & \sigma^{-1}\alpha\sigma
\end{array} 
$$
is a homomorphism of monoids such that $\alpha\Psi$ is $P_{\pi}$-stable, for all $\alpha\in\POI_{\Omega,P}$. 

For each $x\in \Omega$, denote by $i_x$ the integer $i\in \{1,\ldots,k\}$ such that $x\in X_i$ and, for each $a\in \Omega_n$, denote by $i_a$ the integer $i\in \{1,\ldots,k\}$ such that $a\in I_i$. Clearly, $i_x=i_{x\sigma}$ and $i_a=i_{a\sigma^{-1}}$, for all $x\in \Omega$ and $a\in\Omega_n$. 

Let $\alpha\in\POI_{\Omega,P}$. Notice that $\dom(\alpha\Psi)=(\dom(\alpha))\sigma$ (and $\im(\alpha\Psi)=(\im(\alpha))\sigma$). Take $a,b\in\dom(\alpha\Psi)$ 
such that $i_a\le i_b$. Then $i_{a\sigma^{-1}}\le i_{b\sigma^{-1}}$ and, since $a\sigma^{-1}, b\sigma^{-1}\in\dom(\alpha)$, we have 
$i_{(a\sigma^{-1})\alpha}\le i_{(b\sigma^{-1})\alpha}$. Hence
$$
i_{a(\alpha\Psi)}=i_{a\sigma^{-1}\alpha\sigma}=i_{a\sigma^{-1}\alpha}\le i_{b\sigma^{-1}\alpha}=i_{b\sigma^{-1}\alpha\sigma}=i_{b(\alpha\Psi)}, 
$$
which proves that $\alpha\Psi\in\POI_{\Omega_n,P_{\pi}}$. 

Thus, we may consider $\Psi$ as a homomorphism of monoids from $\POI_{\Omega,P}$ into $\POI_{\Omega_n,P_{\pi}}$. 
Analogously, we build a homomorphism of monoids $\Phi: \POI_{\Omega_n,P_{\pi}}\longrightarrow\POI_{\Omega,P}$ by defining $\beta\Phi=\sigma\beta\sigma^{-1}$, 
for each $\beta\in\POI_{\Omega_n,P_{\pi}}$. 
Clearly, $\Psi$ and $\Phi$ are mutually inverse mappings and so they are isomorphisms of monoids, as required.  
\end{proof}

\smallskip

Now, let $k,m\in \N$ be such that $n=km$. Let $\pi=(m,\ldots,m)\in\Omega_n^k$. Denote the uniform partition into intervals $P_{\pi}$ of $\Omega_n$ by $P_{k\times m}$ 
(i.e. we have  $P_{k\times m}=\{I_i\}_{i=1,\dots,k}$, with  
$
I_i=\{(i-1)m+1,\ldots, im\},
$ 
for $i\in \{1,\ldots,k\}$) and denote the monoid $\POI_{\Omega_n,P_{k\times m}}$ by $\POI_{k\times m}$. Therefore, combining Theorems \ref{old} and \ref{iso}, we immediately obtain the following result: 

\begin{corollary} \label{motiv} 
The pseudovariety of inverse semigroups $\pv{NO}$ is generated by the class $\{\POI_{k\times m}\mid k,m\in \N\}$.
\end{corollary}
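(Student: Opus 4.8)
The plan is to observe that Theorem~\ref{old} already identifies $\pv{NO}$ as the pseudovariety generated by all semigroups $\POI_{\Omega,P}$ with $P$ an ordered \emph{uniform} partition of a finite set $\Omega$, so it suffices to check that this generating class coincides, up to isomorphism, with the class $\{\POI_{k\times m}\mid k,m\in\N\}$. The underlying principle is that a pseudovariety of inverse semigroups is by definition closed under isomorphism (being closed under homomorphic images of inverse subsemigroups), so two classes that agree up to isomorphism generate the same pseudovariety.

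First I would take an arbitrary generator $\POI_{\Omega,P}$ appearing in Theorem~\ref{old}, so that $\Omega$ is finite with, say, $n$ elements and $P=\{X_i\}_{i=1,\dots,k}$ is uniform. Uniformity means $|X_i|=|X_j|$ for all $i,j$; writing $m$ for this common block size gives $n=km$ and $\pi:=(|X_1|,\dots,|X_k|)=(m,\dots,m)$. By Theorem~\ref{iso} we have $\POI_{\Omega,P}\cong\POI_{\Omega_n,P_{\pi}}$, and by the very definition of $P_{k\times m}$ and $\POI_{k\times m}$ we have $P_\pi=P_{k\times m}$, whence $\POI_{\Omega_n,P_\pi}=\POI_{k\times m}$. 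Thus every generator from Theorem~\ref{old} is isomorphic to some $\POI_{k\times m}$.

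Conversely, each $\POI_{k\times m}$ is itself of the form $\POI_{\Omega,P}$ with $P$ an ordered uniform partition (namely $\Omega=\Omega_{km}$ and $P=P_{k\times m}$), so it belongs to the generating class of Theorem~\ref{old}. Combining the two inclusions, the pseudovariety generated by $\{\POI_{k\times m}\mid k,m\in\N\}$ equals the one generated by all $\POI_{\Omega,P}$ with $P$ uniform, which is precisely $\pv{NO}$ by Theorem~\ref{old}.

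I do not anticipate any genuine obstacle, as this is a direct bookkeeping combination of the two preceding results; the only point needing a moment's care is the translation of the word ``uniform'' into the numerical data $n=km$ and $\pi=(m,\dots,m)$ that feeds Theorem~\ref{iso} and matches the definition of $P_{k\times m}$.
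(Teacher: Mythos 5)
Your proof is correct and follows exactly the paper's intended argument: the paper derives Corollary~\ref{motiv} by ``combining Theorems~\ref{old} and~\ref{iso}'', which is precisely your observation that a uniform partition $P=\{X_i\}_{i=1,\dots,k}$ of an $n$-set with common block size $m$ yields $\pi=(m,\dots,m)$, so Theorem~\ref{iso} gives $\POI_{\Omega,P}\cong\POI_{\Omega_n,P_\pi}=\POI_{k\times m}$, and conversely each $\POI_{k\times m}$ is itself such a generator. Your write-up merely makes explicit the bookkeeping that the paper leaves to the reader.
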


\smallskip 

This fact gave us the main motivation for the work presented in this paper, which is about the monoids $\POI_{k\times m}$, with $k,m\in\N$. 
The remaining of this paper is organized as follows. 
In Section \ref{rank} we calculate their sizes and ranks and
in Section \ref{pres} we construct presentations for them. 

\smallskip

For general background on Semigroup Theory and standard notation, we refer the reader to Howie's book \cite{Howie:1995}.
For general background on pseudovarieties and finite semigroups, 
we refer the reader to Almeida's book \cite{Almeida:1995}.  
All semigroups considered in this paper are finite. 

\section{Size and rank of $\POI_{k\times m}$}\label{rank}

Let $M$ be a monoid. Recall that the quasi-order $\le_\J$ is defined on $M$ as follows: for all $u,v\in M$, $u\le_\J v$ if and only if $MuM\subseteq MvM$. As usual, the $\J$-class of an element $u\in M$ is denoted by $J_u$ and a partial order relation $\le_\J$ is defined on the set $M/\J$ by $J_u\le_\J  J_v$ if and only if $u\le_\J v$. Given $u,s\in M$, 
we write $u<_\J v$ or $J_u<_\J  J_v$ if and only if $u\le_\J v$ and $(u,v)\notin \J$. 

Recall also that the \textit{rank} of a (finite) monoid $M$ is the minimum size of a generating set of $M$. 

\smallskip 

Let $P$ be an ordered partition of $\Omega$. Let $\alpha,\beta\in \POI_{\Omega,P}$. Since $ \POI_{\Omega,P}$ is an inverse submonoid of $\I(\Omega)$, we immediately have that $\alpha\R \beta$ if and only if $\dom(\alpha)=\dom(\beta)$ and that  
$\alpha\L \beta$ if and only if $\im(\alpha)=\im(\beta)$. 
If $P$ is uniform, it is easy to check also that $\alpha\J \beta$ if and only if $|\im(\alpha)|=|\im(\beta)|$ 
(see \cite[Proposition 5.2.2]{Fernandestese:1998}). 
In fact, more specifically, we have that $J_\alpha\le_\J J_\beta$ if and only if $|\im(\alpha)|\le |\im(\beta)|$. 

\smallskip 

Notice that $\POI_{n\times 1}$ is isomorphic to $\POI_n$, 
whose size is $\binom{2n}{n}$ and rank is $n$ (see \cite[Proposition 2.2]{Fernandes:1997} and \cite[Proposition 2.8]{Fernandes:2001}), 
and $\POI_{1\times n}$ is isomorphic to $\Sym_n$, whose size is well known to be $n!$ and rank is well known to be $2$, for $n\ge3$, and $1$, for $n\in\{1,2\}$.  

\smallskip 

From now on let $k,m\in \N$ be such that $k,m\ge 2$ and let $n=km$. 

\smallskip 

Now, we turn our attention to the $\J$-classes of $\POI_{k\times m}$. 
Let $\alpha\in \POI_{k\times m}$. Then $|\im(\alpha)|=im$, for some $0\le i\le k$. Hence 
$$
\POI_{k\times m}/\J=\{J_0<_\J J_1<_\J\cdots <_\J J_k \},
$$ 
where $J_i=\{\alpha\in \POI_{k\times m}\mid |\im(\alpha)|=im\}$, for $0\le i\le k$. 

\smallskip 

Let $t\in \{1,\ldots,k\}$. We write 
$$
\alpha=\left(\begin{array}{c|c|c|c}
I_{i_1}&I_{i_2}&\cdots&I_{i_t}\\
I_{j_1}&I_{j_2}&\cdots&I_{j_t}
\end{array}\right), 
$$
for all transformations $\alpha\in \POI_{k\times m}$ such that 
$\dom(\alpha)=I_{i_1}\cup I_{i_2}\cup\cdots\cup I_{i_t}$, 
$\im(\alpha)=I_{j_1}\cup I_{j_2}\cup\cdots\cup I_{j_t}$ and 
$I_{i_r}\alpha=I_{j_r}$, for $1\le r\le t$.  
In this case, we assume always that $1\le i_1<i_2<\cdots<i_t\le k$ and $1\le j_1<j_2<\cdots<j_t\le k$. 
Clearly, the set of all such transformations forms an $\H$-class of $\POI_{k\times m}$ contained in $J_t$. 

In particular, it is easy to check that the $\H$-class of the transformations of the form 
$$
\left(\begin{array}{c|c|c|c}
I_{i_1}&I_{i_2}&\cdots&I_{i_t}\\
I_{i_1}&I_{i_2}&\cdots&I_{i_t}
\end{array}\right)
$$
constitutes a group isomorphic to $\Sym_m^t$ and so it has ${(m!)}^t$ elements. 

On the other hand, since there are $\binom{k}{t}$ distinct possibilities for domains (and images) of the transformations of $J_t$, 
we deduce that $|J_t|=\binom{k}{t}^2{(m!)}^t$. 

Thus, we have:

\begin{proposition}
For $k,m\ge2$, the monoid $\POI_{k\times m}$ has $\sum_{t=0}^k \binom{k}{t}^2{(m!)}^t$ elements.
\end{proposition}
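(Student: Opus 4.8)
The plan is to exploit the $\J$-class decomposition $\POI_{k\times m}/\J = \{J_0 <_\J J_1 <_\J \cdots <_\J J_k\}$ already established, where $J_t = \{\alpha \in \POI_{k\times m} \mid |\im(\alpha)| = tm\}$. Since distinct $\J$-classes are disjoint and every $\alpha \in \POI_{k\times m}$ satisfies $|\im(\alpha)| = tm$ for exactly one $t \in \{0,1,\ldots,k\}$, we have a partition
$$
\POI_{k\times m} = \bigcup_{t=0}^{k} J_t \quad\text{(disjoint union),}
$$
whence $|\POI_{k\times m}| = \sum_{t=0}^{k} |J_t|$. It therefore suffices to count each $|J_t|$ and add.

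First I would fix $t$ and count the elements of $J_t$. Such an $\alpha$ is completely determined by three independent pieces of data: its domain, its image, and its action inside the blocks. By $P$-stability the domain is a union of $t$ of the $k$ blocks $I_1,\ldots,I_k$, giving $\binom{k}{t}$ choices, and likewise the image is a union of $t$ blocks, giving a further $\binom{k}{t}$ choices. Once the domain blocks $I_{i_1},\ldots,I_{i_t}$ (with $i_1<\cdots<i_t$) and image blocks $I_{j_1},\ldots,I_{j_t}$ (with $j_1<\cdots<j_t$) are fixed, the $P$-order preserving condition forces $I_{i_r}\alpha = I_{j_r}$ for each $r$, so no other matching of domain blocks to image blocks is admissible. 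Finally, on each of the $t$ blocks $\alpha$ restricts to a bijection of an $m$-element set onto an $m$-element set, contributing $m!$ choices per block and hence $(m!)^t$ in total. Multiplying these independent counts yields $|J_t| = \binom{k}{t}^2 (m!)^t$, exactly as recorded above.

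Summing over $t$ then gives $|\POI_{k\times m}| = \sum_{t=0}^{k} \binom{k}{t}^2 (m!)^t$, which is the assertion. The only point requiring genuine care is the middle step: one must check that the order-preserving requirement pins down the block matching uniquely, so that selecting the domain and image as unordered collections of $t$ blocks does not undercount, and that $P$-stability is precisely what guarantees each block maps bijectively onto a block, so that the per-block contribution is $m!$ rather than something smaller. Both facts are immediate from the definitions of $P$-stable and $P$-order preserving, so no essential difficulty arises; the substance of the argument is entirely contained in the preceding count of $|J_t|$, and the proposition follows by a single summation.
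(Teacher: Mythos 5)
Your proof is correct and follows essentially the same route as the paper: partition $\POI_{k\times m}$ into the $\J$-classes $J_t$, count $\binom{k}{t}$ choices each for domain and image, observe that $P$-order preservation forces the unique order-respecting matching of domain blocks to image blocks, and count $(m!)^t$ for the within-block bijections. The only cosmetic difference is that the paper packages the $(m!)^t$ factor as the size of an $\H$-class (a group isomorphic to $\Sym_m^t$), whereas you count the per-block bijections directly; the underlying computation is identical.
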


\medskip

Next, let 
$
\psi:\POI_{k}\longrightarrow\POI_n  
$
be the mapping defined by 
$$
\dom(\theta\psi)=\cup\{I_i\mid i\in\dom(\theta)\}\quad\text{and}\quad\im(\theta\psi)=\cup\{I_i\mid i\in\im(\theta)\},
$$
for all $\theta\in\POI_k$. 
Notice that, if 
$$
\theta=\left(\begin{array}{cccc}
i_1&i_2&\cdots&i_t\\
j_1&j_2&\cdots&j_t
\end{array}\right)\in \POI_k, 
$$ 
with $1\le t\le k$, $1\le i_1<i_2<\cdots<i_t\le k$ and $1\le j_1<j_2<\cdots<j_t\le k$, then 
$$
\theta\psi=\left(\begin{array}{c|c|c|c}
I_{i_1}&I_{i_2}&\cdots&I_{i_t}\\
I_{j_1}&I_{j_2}&\cdots&I_{j_t}
\end{array}\right)\in \POI_n\cap\POI_{k\times m}. 
$$
Moreover, it is a routine matter to show that $\im(\psi)=\POI_n\cap\POI_{k\times m}$ and $\psi$ is an injective homomorphism of monoids.
 
\smallskip

Let 
$$
{x}_0=\left(\begin{array}{cccc}
2&\cdots&k-1&k\\
1&\cdots&k-2&k-1
\end{array}\right), ~ 
{x}_i=\left(\begin{array}{ccccccc}
1&\cdots&k-i-1&k-i&k-i+2&\cdots&k\\
1&\cdots&k-i-1&k-i+1&k-i+2&\cdots&k
\end{array}\right),~ 1\le i\le k-1, 
$$
and take $\bar{x}_i={x}_i\psi$, for $0\le i\le k-1$. Observe that $\bar{x}_0,\bar{x}_1,\ldots, \bar{x}_{k-1}$ are order preserving and $P$-order preserving transformations such that 
$$
\bar{x}_0=\left(\begin{array}{c|c|c|c}
I_2&\cdots&I_{k-1}&I_k\\
I_1&\cdots&I_{k-2}&I_{k-1}
\end{array}\right), ~ 
\bar{x}_i=\left(\begin{array}{c|c|c|c|c|c|c}
I_1&\cdots&I_{k-i-1}&I_{k-i}&I_{k-i+2}&\cdots&I_k\\
I_1&\cdots&I_{k-i-1}&I_{k-i+1}&I_{k-i+2}&\cdots&I_k
\end{array}\right),~ 1\le i\le k-1.
$$
Since $\POI_{k}$ is generated by
$\{x_0,x_1,\ldots,x_{k-1}\}$ (see \cite{Fernandes:2001}) and $\psi$ is a homomorphism, 
then $\bar{X}=\{\bar x_0,\bar x_1,\ldots,\bar x_{k-1}\}$ is a generating set for $\im(\psi)=\POI_n\cap\POI_{k\times m}$. 

\medskip

Next, recall that is well known that $\Sym_m$ is generated by the permutations $a=(1\ 2)$ and $b=(1\ 2 \cdots m)$. 
Take $c=ab=(1\ 3\ 4 \cdots m)$. 
Thus, since $a=cb^{m-1}$, it is clear that $\Sym_{m}$ is also generated by the permutations $b$ and $c$. 
Let 
$$
a_{i}=(1,\ldots,1,a,1,\ldots,1),~ b_{i}=(1,\ldots,1,b,1,\ldots,1)~ \text{and} ~ c_{i}=(1,\ldots,1,c,1,\ldots,1),
$$
where $a$, $b$ and $c$ are in the position $i$, for $1\le i\le k$ (and $1$ denoting the identity of $\Sym_{m}$). 
Clearly, 
$$
\{ a_{1},a_{2},\ldots,a_{k}, b_{1},b_{2},\ldots,b_{k}\}~ \text{and} ~\{ b_{1},b_{2},\ldots,b_{k}, c_{1},c_{2},\ldots,c_{k}\}
$$
are generating sets of the direct product $\Sym_{m}^{k}$. 

Let $d_{i}=b_{i}c_{i+1}$, for $1\le i\le k-1$, and $d_{k}=b_{k}c_{1}$. 
For $1\le i\le k$, we have $b_{i}^{m}=c_{i}^{m-1}=1$, whence $b_{i}^{{(m-1)}^{2}}=b_{i}$ and $c_{i}^{m}=c_{i}$. 
Moreover, since $b_{i}c_{j}=c_{j}b_{i}$, for $1\le i,j\le k$ and $i\ne j$, it is easy to check that $c_{1}=d_{k}^{m}$, 
$c_{i+1}=d_{i}^{m}$, for $1\le i\le k-1$, and $b_{i}=d_{i}^{{(m-1)}^2}$, for $1\le i\le k$. 
Therefore
$$
\{ d_{1},d_{2},\ldots,d_{k}\}
$$
is also a generating set of $\Sym_{m}^{k}$. 
Observe that, as $m,k\ge2$, the rank of $\Sym_{m}^{k}$ is $k$ (for instance, see \cite{Wiegold:1972}).

\medskip  

Let $\G_{k\times m}$ be the group of units of $\POI_{k\times m}$, i.e. 
$\G_{k\times m}=\{\alpha\in\POI_{k\times m}\mid |\im(\alpha)|=n\}=\Sym_n\cap\POI_{k\times m}$.   
We have a natural isomorphism
$$
\begin{array}{ccc}
\Sym_{m}^{k}&\longrightarrow&\G_{k\times m}\\
   z &\longmapsto & \bar z, 
\end{array} 
$$
defined by $x\bar z=(x-(i-1)m)z_i+(i-1)m$, for $x\in I_i$, $1\le i\le k$, and $z=(z_1,z_2,\ldots,z_k)\in\Sym_{m}^{k}$. 

\medskip

Let $\bar A=\{\bar{a}_{1},\bar{a}_{2},\ldots,\bar{a}_{k}\}$, $\bar B=\{\bar{b}_{1},\bar{b}_{2},\ldots,\bar{b}_{k}\}$, 
$\bar C=\{\bar{c}_{1},\bar{c}_{2},\ldots,\bar{c}_{k}\}$ and $\bar D=\{\bar{d}_{1},\bar{d}_{2},\ldots,\bar{d}_{k}\}$.
By the above observations, $\bar A\cup \bar B$, $\bar B\cup \bar C$ and $\bar D$ are three generating sets of $\G_{k\times m}$. 
Moreover, the number of elements of $\bar D$ is $k$, which is precisely the rank of $\G_{k\times m}$.  

\begin{proposition}\label{gerprop}
For $k,m\ge 2$, the monoid $\POI_{k\times m}$ is generated by $\bar{X}\cup\G_{k\times m}$. 
\end{proposition}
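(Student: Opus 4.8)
The plan is to prove the proposition by exhibiting, for each element of $\POI_{k\times m}$, a factorization as a unit followed by a ``block order-preserving'' map lying in $\im(\psi)$. Since $\bar X$ and $\G_{k\times m}$ are both contained in $\POI_{k\times m}$, the inclusion $\langle\bar X\cup\G_{k\times m}\rangle\subseteq\POI_{k\times m}$ is immediate. As $\bar X$ generates $\im(\psi)$ (established above) and $\G_{k\times m}$ is itself part of the generating set, it therefore suffices to show that for every $\alpha\in\POI_{k\times m}$ there exist $\gamma\in\G_{k\times m}$ and $\beta\in\im(\psi)$ with $\alpha=\gamma\beta$.

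To this end I fix $\alpha\in\POI_{k\times m}$ with domain $I_{i_1}\cup\cdots\cup I_{i_t}$ and image $I_{j_1}\cup\cdots\cup I_{j_t}$, where $I_{i_r}\alpha=I_{j_r}$ and $i_1<\cdots<i_t$, $j_1<\cdots<j_t$ (the latter forced by $P$-order preservation). Its ``shape'' is the order preserving partial permutation $\theta\in\POI_k$ sending $i_r\mapsto j_r$, and I set $\beta:=\theta\psi\in\im(\psi)$; this is the canonical element with the same domain and image as $\alpha$ whose restriction $\kappa_r$ to each block $I_{i_r}$ is the unique order preserving bijection onto $I_{j_r}$. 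The idea is then to correct $\alpha$ back to $\beta$ by a unit that acts only within the blocks of $\dom(\alpha)$: I would define $\gamma\in\I_n$ by $\gamma|_{I_{i_r}}=\alpha|_{I_{i_r}}\kappa_r^{-1}$ (a permutation of $I_{i_r}$, since $\alpha|_{I_{i_r}}$ and $\kappa_r$ are both bijections $I_{i_r}\to I_{j_r}$) for $1\le r\le t$, and $\gamma|_{I_s}=\mathrm{id}_{I_s}$ for every block $I_s$ with $s\notin\{i_1,\dots,i_t\}$. By construction $\gamma$ is a permutation of $\Omega_n$ fixing each block setwise, hence $P$-stable and trivially $P$-order preserving, so $\gamma\in\G_{k\times m}$. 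A direct check on each block gives $(\gamma\beta)|_{I_{i_r}}=\alpha|_{I_{i_r}}\kappa_r^{-1}\kappa_r=\alpha|_{I_{i_r}}$, and since $\gamma$ preserves $\dom(\alpha)$ one has $\dom(\gamma\beta)=\dom(\alpha)$; thus $\gamma\beta=\alpha$, as required.

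The one point that needs care, and the main (mild) obstacle, is that $\beta=\theta\psi$ is \emph{not} invertible in general, its domain being a proper subset of $\Omega_n$, so one cannot simply put $\gamma=\alpha\beta^{-1}$ and expect to land in the group of units. The correct remedy is precisely the explicit extension above: define the correcting permutation block-by-block on $\dom(\alpha)$ and extend it by the identity on the remaining blocks, which is exactly what guarantees that $\gamma$ is a genuine element of $\G_{k\times m}$ rather than a merely restricted permutation. Once this is in place, the domain bookkeeping and the cancellation $\kappa_r^{-1}\kappa_r=\mathrm{id}$ are routine, and the proposition follows.
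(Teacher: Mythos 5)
Your proof is correct and follows essentially the same strategy as the paper's: factor $\alpha$ through the canonical block map $\theta\psi\in\langle\bar X\rangle$ determined by its ``shape'', corrected by a unit that permutes within blocks and is the identity elsewhere. The only difference is the side on which the correction sits---the paper writes $\alpha=(\theta\psi)\gamma$ with the unit acting inside the image blocks, while you write $\alpha=\gamma(\theta\psi)$ with the unit acting inside the domain blocks---an immaterial mirror of the same argument.
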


\begin{proof}
Let $\alpha\in\POI_{k\times m}$ be a nonempty transformation (notice that, clearly, $\langle{\bar X}\rangle$ contains the empty transformation) and suppose that 
$\alpha=\left(\begin{array}{c|c|c|c}
I_{i_1}&I_{i_2}&\cdots&I_{i_t}\\
I_{j_1}&I_{j_2}&\cdots&I_{j_t}
\end{array}\right)$, 
with $1\le t\le k$. 

Take $\theta=\left(\begin{array}{cccc}
i_1&i_2&\cdots&i_t\\
j_1&j_2&\cdots&j_t
\end{array}\right) \in \POI_k$ and  let $\bar\alpha=\theta\psi$. Then $\bar\alpha\in \langle{\bar X}\rangle$. 

On the other hand, define $\gamma\in\T_n$ by $x\gamma=(x+(i_r-j_r)m)\alpha$, for $x\in I_{j_r}$, $1\le r\le t$,  
and $x\gamma=x$, for $x\in\Omega_n\setminus\im(\alpha)$. 
Then $\gamma\in\G_{k\times m}$ and it is a routine matter to check that $\alpha=\bar\alpha\gamma$, which proves the result. 
\end{proof}

It follows immediately: 

\begin{corollary}\label{ger} 
For $k,m\ge 2$, $\bar A\cup \bar B \cup\bar X$, $\bar B\cup \bar C\cup\bar X$ and $\bar D\cup\bar X$
are generating sets of $\POI_{k\times m}$. 
\end{corollary}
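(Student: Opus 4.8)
The plan is to derive Corollary \ref{ger} directly from Proposition \ref{gerprop}, which has just established that $\POI_{k\times m}=\langle \bar X\cup\G_{k\times m}\rangle$. Since the three sets in question all contain $\bar X$, it suffices to check that each of $\bar A\cup\bar B$, $\bar B\cup\bar C$ and $\bar D$ generates the group of units $\G_{k\times m}$; once this is known, adjoining $\bar X$ yields a generating set for the whole monoid by Proposition \ref{gerprop}.

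For that purpose I would invoke the natural isomorphism $\Sym_m^k\longrightarrow\G_{k\times m}$, $z\mapsto\bar z$, displayed above. Under this isomorphism the elements $\bar a_i,\bar b_i,\bar c_i,\bar d_i$ are precisely the images of $a_i,b_i,c_i,d_i\in\Sym_m^k$. Earlier in the excerpt it was already verified that $\{a_1,\ldots,a_k,b_1,\ldots,b_k\}$, $\{b_1,\ldots,b_k,c_1,\ldots,c_k\}$ and $\{d_1,\ldots,d_k\}$ are all generating sets of $\Sym_m^k$ (the last one via the explicit relations $c_1=d_k^{\,m}$, $c_{i+1}=d_i^{\,m}$ and $b_i=d_i^{\,(m-1)^2}$, together with $d_i=b_ic_{i+1}$). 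Since a group isomorphism carries generating sets to generating sets, the corresponding image sets $\bar A\cup\bar B$, $\bar B\cup\bar C$ and $\bar D$ each generate $\G_{k\times m}$, as claimed.

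Combining the two observations finishes the argument: for each of the three cases we have $\langle S\rangle=\G_{k\times m}$ with $S\in\{\bar A\cup\bar B,\bar B\cup\bar C,\bar D\}$, and hence
$$
\langle S\cup\bar X\rangle\supseteq\langle\G_{k\times m}\cup\bar X\rangle=\POI_{k\times m}
$$
by Proposition \ref{gerprop}, while the reverse inclusion is trivial. Thus $\bar A\cup\bar B\cup\bar X$, $\bar B\cup\bar C\cup\bar X$ and $\bar D\cup\bar X$ are all generating sets of $\POI_{k\times m}$.

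There is essentially no obstacle here, since everything needed has been assembled in the preceding paragraphs; the only point requiring a word of care is the transfer of generation across the isomorphism, but this is immediate. The statement is genuinely a corollary, so the proof is short and reduces to citing Proposition \ref{gerprop} and the group-theoretic facts already recorded. One might optionally remark, for motivation of the later rank computation, that $\bar D\cup\bar X$ has the fewest elements among the three, namely $k+k=2k$ generators.
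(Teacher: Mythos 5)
Your proposal is correct and follows exactly the paper's route: the paper derives this corollary immediately from Proposition \ref{gerprop} together with the preceding observation that $\bar A\cup\bar B$, $\bar B\cup\bar C$ and $\bar D$ are generating sets of $\G_{k\times m}$ (obtained via the isomorphism $\Sym_m^k\cong\G_{k\times m}$). You have merely written out the details that the paper leaves implicit under ``It follows immediately.''
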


Notice that, in particular,  $\bar{D}\cup\bar X$ is a generating set of $\POI_{k\times m}$ with $2k$ elements. 

\begin{proposition} 
For $k,m\ge 2$, the rank of $\POI_{k\times m}$ is $2k$.
\end{proposition}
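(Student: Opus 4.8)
The plan is to prove that the rank of $\POI_{k\times m}$ equals $2k$ by establishing a matching upper and lower bound. The upper bound is already in hand: by Corollary \ref{ger}, the set $\bar{D}\cup\bar X$ generates $\POI_{k\times m}$ and has exactly $2k$ elements, so the rank is at most $2k$. The entire difficulty lies in the lower bound, namely showing that no generating set can have fewer than $2k$ elements.

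For the lower bound, the strategy is to exploit the $\J$-class structure. Recall that $\POI_{k\times m}/\J=\{J_0<_\J J_1<_\J\cdots<_\J J_k\}$, with $J_k=\G_{k\times m}$ the group of units and $J_{k-1}$ the top non-unit $\J$-class. The key observation is that any generating set $G$ must contain, on the one hand, enough units to generate the group $\G_{k\times m}$ modulo what can be recovered from products descending into and returning from lower $\J$-classes, and on the other hand enough elements lying in $J_{k-1}$ (or below) to reach that $\J$-class at all. I would first argue that elements of a generating set that lie strictly below $J_{k-1}$ cannot help produce elements of $J_{k-1}$, since in a finite inverse semigroup a product lands in a $\J$-class no higher than every factor, so every element of $J_{k-1}$ must be expressible using at least one generator from $J_{k-1}\cup J_k$ together with factors from $J_k$. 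More precisely, since multiplying by a unit preserves the $\J$-class, the elements of $J_{k-1}$ that appear in $G$, after closing under left/right multiplication by units, must already exhaust all $\binom{k}{k-1}^2=k^2$ many $\R$-$\L$ combinations in $J_{k-1}$; a counting/orbit argument on how the units $\G_{k\times m}$ act on the $\mathcal H$-classes of $J_{k-1}$ then forces at least $k$ generators in $J_{k-1}$ (one essentially for each of the $k$ possible domains/images, since the unit group acts within a single block-permutation and cannot move one interval-pattern to another).

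The second half of the lower bound concerns the group of units itself: I would show that at least $k$ generators must be units, because $\G_{k\times m}\cong\Sym_m^k$ has rank exactly $k$ (as noted in the excerpt, citing \cite{Wiegold:1972}), and no non-unit element can contribute to generating a unit — in a finite monoid the units form a subgroup, and any product equal to a unit must have all its factors be units. Hence the $k$ generators needed to build $\G_{k\times m}$ are disjoint from the $\ge k$ generators needed to reach $J_{k-1}$, giving $\mathrm{rank}\ge k+k=2k$. The main obstacle will be making the $J_{k-1}$ counting argument fully rigorous: I must verify that the action of the unit group $\G_{k\times m}=\Sym_m^k$ on the set of $\mathcal H$-classes of $J_{k-1}$ (equivalently on pairs of $(k-1)$-element interval-patterns) has exactly enough orbits to force $k$ distinct non-unit generators, and that each such generator is genuinely irredundant — i.e. cannot be obtained as a product of the others with units interposed. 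I would handle this by tracking domains and images: a product $\alpha\beta$ with $\alpha,\beta\in J_{k-1}$ either drops into $J_{k-2}$ or stays in $J_{k-1}$ only when $\im(\alpha)=\dom(\beta)$, and in the latter case the resulting $\R$-class (domain) equals that of $\alpha$ and the $\L$-class (image) equals that of $\beta$, so products among $J_{k-1}$-elements and units can never create a domain-image pattern not already represented; a careful bookkeeping of which of the $k$ possible domains and $k$ possible images are covered then yields the bound.
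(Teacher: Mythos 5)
Your proof is correct and takes essentially the same approach as the paper: the upper bound comes from the $2k$-element generating set $\bar D\cup\bar X$ of Corollary~\ref{ger}, and the lower bound is obtained exactly as in the paper by counting $k$ necessary unit generators (a product equal to a unit has all factors units, and $\G_{k\times m}\cong\Sym_{m}^{k}$ has rank $k$) plus $k$ necessary generators of rank $m(k-1)$ (the image of any product lying in $J_{k-1}$ equals the image of its last non-unit factor, since units fix each block $I_j$ setwise, so all $k$ images $\Omega_n\setminus I_j$ must occur among the generators). The only blemish is your intermediate suggestion that the generators in $J_{k-1}$, closed under unit multiplication alone, must exhaust all $k^2$ $\H$-classes --- that claim is neither true nor needed, since products of $J_{k-1}$-elements create new domain--image pairs --- but your final domain/image bookkeeping supersedes it and is the paper's own argument.
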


\begin{proof}
Let $L$ be a generating set of $\POI_{k\times m}$. 

Take a transformation $\alpha\in\POI_{k\times m}$ of rank $m(k-1)$. Then, $\im(\alpha)=\Omega_n\setminus I_j$, for some $1\le j\le k$. 
Let $\alpha_1,\alpha_2,\ldots,\alpha_t\in L$ be such that $\alpha=\alpha_1\alpha_2\cdots\alpha_t$. 
Hence, for $1\le i\le t$, the rank of $\alpha_i$ is either $mk$ or $m(k-1)$. 
Moreover, at least one of the transformations $\alpha_1,\alpha_2,\ldots,\alpha_t$ must have rank equal to $m(k-1)$. 
Let 
$$
p=\max\{i\in\{1,\ldots, t\}\mid \alpha_i \textrm{ has rank equal to } m(k-1)\}.
$$  
Then $\xi=\alpha_{p+1}\cdots\alpha_t$ has rank $km$ (here $\xi $ denotes the identity, if $p=t$) and so 
$\xi= \left(\begin{array}{c|c|c|c}
I_1&I_2&\cdots&I_k\\
I_1&I_2&\cdots&I_k
\end{array}\right)\in\G_{k\times m}$. 
Thus $\im(\alpha)=(\im(\alpha_1\alpha_2\cdots\alpha_p))\xi=\im(\alpha_1\alpha_2\cdots\alpha_p)\subseteq \im(\alpha_p)$. 
As $|\im(\alpha)|=|\im(\alpha_p)|$, we deduce that $\im(\alpha_p)=\im(\alpha)$. 
Therefore  $L$ contains at least one element whose image is $\Omega_n\setminus I_j$, for each $1\le j\le k$, 
and so $L$ must contain at least $k$ elements of rank $m(k-1)$. 

Next, we consider the elements of $\POI_{k\times m}$ of rank $km$. 
They constitute the group of units $\G_{k\times m}$ of $\POI_{k\times m}$, which has rank $k$ (as observed above). 
Therefore, we must also have at least $k$ elements of rank $km$ in the generating set $L$.

Thus $|L|\ge 2k$. Since $\bar{D}\cup\bar X$ is a generating set of $\POI_{k\times m}$ with $2k$ elements, the result follows. 
\end{proof}

\section{Presentations for $\POI_{k\times m}$} \label{pres}

We begin this section by recalling some notions and facts on presentations. 
Let $A$ be a set and denote by $A^*$ the free monoid generated by
$A$. Usually, the set $A$ is called \textit{alphabet} and the elements of $A$ and $A^*$ are called \textit{letters} and 
\textit{words}, respectively.  
A \textit{monoid presentation} is an ordered pair $\langle A\mid
R\rangle$, where $A$ is an alphabet and $R$ is a subset of
$A^*\times A^*$. An element $(u,v)$ of $A^*\times A^*$ is called a
{\it relation} and it is usually represented by $u=v$. To avoid
confusion, given $u, v\in A^*$, we will write $u\equiv v$, instead
of  $u=v$, whenever we want to state precisely that $u$ and $v$
are identical words of $A^*$. A monoid $M$ is said to be {\it
defined by a presentation} $\langle A\mid R\rangle$ if $M$ is
isomorphic to $A^*/\rho_R$, where $\rho_R$ denotes the smallest
congruence on $A^*$ containing $R$. For more details see
\cite{Lallement:1979} or \cite{Ruskuc:1995}.

A direct method to find a presentation for a monoid
is described by the following well-known result (e.g.  see \cite[Proposition 1.2.3]{Ruskuc:1995}).  

\begin{proposition}\label{met} 
Let $M$ be a monoid generated by a set $A$ (also considered as an alphabet) and let $R\subseteq A^*\times A^*$.
Then $\langle A\mid R\rangle$ is a presentation for $M$ if and only
if the following two conditions are satisfied:
\begin{enumerate}
\item
The generating set $A$ of $M$ satisfies all the relations from $R$;  
\item 
If $u,v\in A^*$ are any two words such that 
the generating set $A$ of $M$ satisfies the relation $u=v$, then $u=v$ is a consequence of $R$.
\end{enumerate} 
\end{proposition}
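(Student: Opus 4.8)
The plan is to translate both the hypotheses and the conclusion into statements about congruences on $A^*$, and then reduce everything to the universal property that $\rho_R$ is the smallest congruence on $A^*$ containing $R$. Since $A$ generates $M$, the inclusion of $A$ into $M$ extends uniquely to a surjective monoid homomorphism $\varphi\colon A^*\longrightarrow M$, and $M$ is isomorphic to $A^*/\ker\varphi$, where $\ker\varphi=\{(u,v)\in A^*\times A^*\mid u\varphi=v\varphi\}$ is the associated congruence. The first observation I would make is that, under this dictionary, the phrase ``the generating set $A$ satisfies the relation $u=v$'' means exactly $(u,v)\in\ker\varphi$, while ``$u=v$ is a consequence of $R$'' means exactly $(u,v)\in\rho_R$. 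Consequently, condition (1) is precisely the inclusion $R\subseteq\ker\varphi$, and condition (2) is precisely the inclusion $\ker\varphi\subseteq\rho_R$.

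For the ``if'' direction, I would assume (1) and (2) and prove that $\rho_R=\ker\varphi$. From (1) we have $R\subseteq\ker\varphi$; since $\ker\varphi$ is a congruence containing $R$ and $\rho_R$ is the smallest such congruence, it follows that $\rho_R\subseteq\ker\varphi$. Combined with the reverse inclusion supplied by (2), this gives $\rho_R=\ker\varphi$, whence $A^*/\rho_R=A^*/\ker\varphi\cong M$. As this isomorphism sends the class of each letter $a\in A$ to $a\in M$, it is compatible with the chosen generating set, and so $\langle A\mid R\rangle$ is a presentation for $M$.

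For the ``only if'' direction, I would suppose that $\langle A\mid R\rangle$ is a presentation for $M$ and recover $\rho_R=\ker\varphi$ from this. Composing the canonical projection $A^*\longrightarrow A^*/\rho_R$ with the given isomorphism $A^*/\rho_R\cong M$ yields a homomorphism $A^*\longrightarrow M$ fixing every generator $a\in A$; by the uniqueness in the universal property of the free monoid $A^*$, this composite coincides with $\varphi$. Since the second factor is an isomorphism, the kernel of the composite equals $\rho_R$, so $\ker\varphi=\rho_R$. Conditions (1) and (2) then follow at once, respectively, from $R\subseteq\rho_R=\ker\varphi$ and from $\ker\varphi=\rho_R\subseteq\rho_R$.

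The only delicate point I anticipate is the bookkeeping about compatibility with the generating set: a presentation is not merely an abstract isomorphism $A^*/\rho_R\cong M$ but one respecting the identification of letters with generators, and it is exactly this compatibility that lets me identify the relevant composite homomorphism with $\varphi$ and thereby force $\rho_R=\ker\varphi$. Once that identification is secured, everything else is a formal manipulation of the two inclusions $R\subseteq\ker\varphi$ and $\ker\varphi\subseteq\rho_R$ against the minimality of $\rho_R$.
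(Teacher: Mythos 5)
Your proof is correct, but there is nothing in the paper to compare it with: the paper does not prove Proposition~\ref{met} at all, quoting it as a well-known fact with a pointer to Ru\v{s}kuc's thesis. Your argument --- encode both conditions as the inclusions $R\subseteq\ker\varphi$ and $\ker\varphi\subseteq\rho_R$, where $\varphi\colon A^*\to M$ is the canonical surjection, and play them against the minimality of $\rho_R$ to get $\rho_R=\ker\varphi$ --- is precisely the standard proof of this result, so nothing is lost by the paper's omission.

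The ``delicate point'' you flag deserves emphasis, because it is not mere bookkeeping: with the paper's literal definition of ``defined by a presentation'' (an abstract isomorphism $M\cong A^*/\rho_R$), the ``only if'' direction of the proposition is actually \emph{false}. For a counterexample, let $M=\{e,g\}$ be the two-element group, take $A=\{e,g\}$ as generating set (and alphabet), and let $R=\{(g,\varepsilon),(e^2,\varepsilon)\}$, where $\varepsilon$ denotes the empty word. Then every word reduces modulo $\rho_R$ to $\varepsilon$ or $e$, so $A^*/\rho_R$ is abstractly isomorphic to $M$; yet condition (1) fails, since the relation $g=\varepsilon$ is not satisfied by the generating set ($g\ne e$ in $M$). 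So the equivalence only holds under the generator-compatible reading you adopt, namely that the isomorphism $A^*/\rho_R\cong M$ is the one induced by the identity map on $A$; that is also how the cited source frames the definition, and it is the only direction of compatibility the paper actually uses (it applies the ``if'' direction, via its Lemmas~\ref{gerrel} and~\ref{apres}, to establish its presentations for $\POI_{k\times m}$).
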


Given a presentation for a monoid, a method to find a new
presentation consists in applying Tietze transformations. For a
monoid presentation $\langle A\mid R\rangle$, the 
(four) \emph{elementary Tietze transformations} are:

\begin{description}
\item-
Adding a new relation $u=v$ to $\langle A\mid R\rangle$,
provided that $u=v$ is a consequence of $R$;
\item-
Deleting a relation $u=v$ from $\langle A\mid R\rangle$,
provided that $u=v$ is a consequence of $R\backslash\{u=v\}$;
\item-
Adding a new generating symbol $b$ and a new relation $b=w$, where
$w\in A^*$;
\item- 
If $\langle A\mid R\rangle$ possesses a relation of the form
$b=w$, where $b\in A$, and $w\in(A\backslash\{b\})^*$, then
deleting $b$ from the list of generating symbols, deleting the
relation $b=w$, and replacing all remaining appearances of $b$ by
$w$.
\end{description}

The next result is also well-known (e.g. see \cite[Proposition 3.2.5]{Ruskuc:1995}): 

\begin{proposition}\label{tietze}
Two finite presentations define the same monoid if and only if one
can be obtained from the other by applying a finite number of elementary
Tietze transformations.  
\end{proposition}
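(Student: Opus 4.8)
The plan is to prove the two implications separately, the forward (``soundness'') direction being routine and the converse (``completeness'') direction carrying the real content. For soundness it suffices, by induction on the length of the sequence, to check that each single elementary Tietze transformation preserves the defined monoid up to isomorphism. For the first two transformations this is immediate: adding or deleting a relation that is a consequence of the others does not change the smallest congruence $\rho_R$, hence not the quotient $A^*/\rho_R$. For the third and fourth (which are mutually inverse), I would exhibit the homomorphism between $A^*/\rho_R$ and $(A\cup\{b\})^*/\rho_{R\cup\{b=w\}}$ that fixes $A$ and sends $b\mapsto w$, and verify directly that it is an isomorphism, the point being that the adjoined generator $b$ is forced to coincide with $w$.

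For completeness, the essential tool is the characterisation that, whenever $\langle A\mid R\rangle$ defines a monoid $M$, a relation $u=v$ with $u,v\in A^*$ is a consequence of $R$ \emph{if and only if} $u$ and $v$ represent the same element of $M$; this follows from the definition of ``defined by a presentation'' together with Proposition~\ref{met}. Now suppose $\langle A\mid R\rangle$ and $\langle B\mid S\rangle$ both define $M$, and write $\bar w$ for the element of $M$ represented by a word $w$ (over either alphabet, via the fixed isomorphisms onto $M$). After renaming, which is itself realisable by Tietze transformations, I may assume $A\cap B=\emptyset$. For each $b\in B$ choose a word $w_b\in A^*$ with $\overline{w_b}=\bar b$, and for each $a\in A$ choose a word $v_a\in B^*$ with $\overline{v_a}=\bar a$.

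I would then pass from $\langle A\mid R\rangle$ to $\langle B\mid S\rangle$ through an intermediate ``amalgamated'' presentation, in the following order. First, apply the third transformation repeatedly to adjoin every $b\in B$ with the relation $b=w_b$, obtaining $\langle A\cup B\mid R\cup R_1\rangle$ where $R_1=\{b=w_b:b\in B\}$. Next, use the first transformation to add all relations of $S$: each $s_1=s_2$ in $S$ holds in $M$ and so is a consequence of $R\cup R_1$ by the characterisation above. Similarly adjoin the relations $R_2=\{a=v_a:a\in A\}$, which also hold in $M$. Finally I would dismantle the presentation symmetrically: using each relation $a=v_a$ (with $v_a\in B^*$, hence free of every $A$-letter) apply the fourth transformation to eliminate the generators $a\in A$ one at a time, substituting $v_a$ throughout. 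This leaves a presentation on the alphabet $B$ whose relations are $S$ together with the images $R',R_1'$ of $R$ and $R_1$. Since this presentation and $\langle B\mid S\rangle$ both define $M$ on the same generating set $B$, every relation of $R'\cup R_1'$ holds in $M$ and is therefore a consequence of $S$, so I may delete them all by the second transformation, arriving at $\langle B\mid S\rangle$.

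The main obstacle is the bookkeeping required to justify, at each step, that an added relation is genuinely a consequence of the current set and that a deleted relation is genuinely redundant. Everything reduces to the characterisation lemma, but applying it means tracking how words behave under the substitutions $b\mapsto w_b$ and $a\mapsto v_a$: verifying that $a=v_a$ is a consequence of $R\cup R_1\cup S$, for example, amounts to checking that replacing each $b$ in $v_a$ by $w_b$ yields a word in $A^*$ representing $\bar a$, which holds because $\overline{w_b}=\bar b$ for every $b$ and hence the substituted word represents $\overline{v_a}=\bar a$. One must also secure the disjointness reduction and the requirement in the fourth transformation that the eliminating relation have the form $a=w$ with $w$ free of $a$; both are easily arranged, the latter automatically since $v_a\in B^*$. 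Finally, the finiteness of $A$, $B$, $R$ and $S$ guarantees that the entire procedure uses only finitely many elementary transformations, as required.
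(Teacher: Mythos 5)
The paper offers no proof of this proposition at all: it is invoked as a well-known result with a citation to \cite{Ruskuc:1995} (Proposition 3.2.5 there), so there is no internal argument to compare yours against. Judged on its own merits, your proof is the standard argument for the Tietze theorem (for monoids as for groups) and it is correct: soundness by checking that each elementary transformation preserves $A^*/\rho_R$ up to isomorphism, and completeness by passing through the amalgamated presentation $\langle A\cup B\mid R\cup R_1\cup S\cup R_2\rangle$, governed by the characterisation that, when $\langle A\mid R\rangle$ defines $M$, a relation $u=v$ over $A^*$ is a consequence of $R$ precisely when $u$ and $v$ represent the same element of $M$. You also handle the two places where such proofs usually go wrong: the eliminating relations $a=v_a$ have $v_a\in B^*$, hence they are of the admissible form and are left intact by the successive eliminations of the letters of $A$; and the consequence checks rest on a consistent generator assignment, since $b$ represents $\bar b$ in the amalgamated presentation exactly because $w_b$ was chosen with $\overline{w_b}=\bar b$. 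Two small points you could still make explicit: the final deletions of $R'\cup R_1'$ are legitimate in any order because each such relation is a consequence of $S$ alone and $S$ is never deleted, so the relation removed at each stage is indeed a consequence of what remains; and the preliminary renaming that forces $A\cap B=\emptyset$ needs, besides the third and fourth transformations, one application of the first transformation (using symmetry of the generated congruence) to reverse the added relation $a'=a$ into the eliminating form $a=a'$. The finiteness of $A$, $B$, $R$, $S$ is correctly what bounds the number of steps.
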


Another tool that we will use is given by the following proposition (e.g. see \cite{Howie&Ruskuc:1995}): 

\begin{proposition}\label{dir}
Let $M$ and $N$ be two monoids defined by the monoid presentations $\langle A\mid R\rangle$ and $\langle B\mid S\rangle$, respectively.  Then the monoid presentation $\langle A, B\mid R,S, ab=ba, \, a\in A, b\in B\rangle$ defines the direct product $M\times T$. 
\end{proposition}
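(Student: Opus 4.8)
The plan is to apply Proposition \ref{met} with alphabet $A\cup B$ (renaming letters if necessary, I may assume this to be a disjoint union) and relation set $T=R\cup S\cup\{ab=ba\mid a\in A,\,b\in B\}$. Throughout I write $\bar w$ for the image in $M$ of a word $w\in A^*$ and, abusing notation, also for the image in $N$ of a word $w\in B^*$. Since $M$ is generated by $\{\bar a\mid a\in A\}$ and $N$ by $\{\bar b\mid b\in B\}$, the direct product $M\times N$ is generated by $\{(\bar a,1)\mid a\in A\}\cup\{(1,\bar b)\mid b\in B\}$, which I take to be the image of $A\cup B$ under the map $a\mapsto(\bar a,1)$, $b\mapsto(1,\bar b)$. (I note in passing the evident typo in the statement: the presentation defines $M\times N$, not $M\times T$.)

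First I would verify condition (1) of Proposition \ref{met}. The relations of $R$ hold because they hold in $M$ and the first-coordinate projection identifies the submonoid generated by the $(\bar a,1)$ with $M$; symmetrically, the relations of $S$ hold via the second coordinate. Each commuting relation $ab=ba$ holds because $(\bar a,1)(1,\bar b)=(\bar a,\bar b)=(1,\bar b)(\bar a,1)$.

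The substantive step is a separation (normal form) lemma: every word $w\in(A\cup B)^*$ is, modulo $T$, equal to a word of the form $w_Aw_B$ with $w_A\in A^*$ and $w_B\in B^*$. I would prove this by induction on the number of pairs consisting of a $B$-letter occurring to the left of an $A$-letter, using the relations $ab=ba$ to transpose an adjacent pair $ba$ (with $b\in B$, $a\in A$) and thereby push every $B$-letter to the right of every $A$-letter. This is where the commuting relations do all the work, and it is really the only point requiring any care, though the argument itself is routine.

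With the separation lemma in hand, condition (2) follows. Suppose $u,v\in(A\cup B)^*$ satisfy $u=v$ in $M\times N$. By the lemma, $u$ and $v$ are $T$-equivalent to words $u_Au_B$ and $v_Av_B$ respectively, with $u_A,v_A\in A^*$ and $u_B,v_B\in B^*$. Evaluating in $M\times N$ gives $(\overline{u_A},\overline{u_B})=(\overline{v_A},\overline{v_B})$, whence $\overline{u_A}=\overline{v_A}$ in $M$ and $\overline{u_B}=\overline{v_B}$ in $N$. Because $M\cong A^*/\rho_R$, the first equality means $u_A=v_A$ is a consequence of $R$, and similarly $u_B=v_B$ is a consequence of $S$; both are therefore consequences of $T$. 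Combining these, $u=u_Au_B=v_Av_B=v$ is a consequence of $T$, as required. The main obstacle, as indicated, will be the separation lemma, since once words are normalized the verification reduces immediately to the defining presentations of the two factors.
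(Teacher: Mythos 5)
Your proof is correct, but there is nothing in the paper to compare it against: the paper states Proposition \ref{dir} as a known result and cites Howie and Ru\v{s}kuc for it, giving no proof of its own. Your argument is the standard self-contained one and it is sound. Condition (1) of Proposition \ref{met} is a direct computation, as you say; your separation lemma (pushing every $B$-letter to the right of every $A$-letter via the commuting relations, by induction on the number of inverted pairs) is exactly the normal-form step such a proof requires; and condition (2) then reduces correctly, via the evaluation $(\overline{u_A},\overline{u_B})=(\overline{v_A},\overline{v_B})$, to the defining presentations of the factors, since the coordinatewise equalities mean precisely that $u_A=v_A$ lies in $\rho_R$ and $u_B=v_B$ lies in $\rho_S$, hence both are consequences of your relation set $T$. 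Two small points deserve to be made explicit: the identities of $M$ and $N$ are what make $\{(\bar a,1)\mid a\in A\}\cup\{(1,\bar b)\mid b\in B\}$ a generating set of $M\times N$ (the result genuinely needs monoids rather than semigroups), and the existence of an \emph{adjacent} pair $ba$ whenever the inversion count is positive needs a one-line justification (take an inversion of minimal gap). You are also right that ``$M\times T$'' in the statement is a typo for $M\times N$. Finally, it is worth noting that your decomposition strategy is precisely the one the paper itself deploys, at larger scale, in its proof of the main presentation theorem: Lemmas \ref{dec}, \ref{fcan} and \ref{apres} normalize words over $A\cup B\cup X$ into the form $us$ with $u\in X^*$ and $s\in(A\cup B)^*$ and then invoke the known presentations of $\POI_k$ and $\Sym_m^k$, which is your separation-plus-reduction argument with an extra layer of bookkeeping.
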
 

\smallskip 

Our strategy for obtainning a presentation for $\POI_{k\times m}$ will use well-known presentations of $\Sym_{m}$ and $\POI_k$. 

\smallskip 

First, we consider the following (monoid) presentation of $\Sym_{m}$, with $m+1$ relations in terms of the generators $a$ and $b$ defined in the previous section:
$$
\langle a,b \mid a^{2}=b^{m}=(ba)^{m-1}=(ab^{m-1}ab)^{3}=(ab^{m-j}ab^{j})^{2}=1,\ 2\le j\le m-2 \rangle 
$$
(for instance, see \cite{Fernandes:2002b}). 
From this presentation, applying Tietze transformations, we can easily deduce the following presentation for $\Sym_{m}$, also with $m+1$ relations, in terms of the generators $b$ and $c$ (also defined in the previous section):
$$
\langle b,c \mid (cb^{m-1})^{2}=b^{m}=(bcb^{m-1})^{m-1}=(cb^{m-2}c)^{3}=(cb^{m-j-1}cb^{j-1})^{2}=1,\ 2\le j\le m-2 \rangle 
$$
(recall that $c=ab$ and $a=cb^{m-1}$). Notice that $c^{m-1}=1$. 

Next, we use these presentations of $\Sym_{m}$ for getting two presentations of $\Sym_{m}^{k}$.

\smallskip 

Consider the alphabets $A=\{a_i \mid 1\leq i\leq k\}$ and $B=\{b_{i}\mid 1\leq i\leq k\}$ (with $k$ letters each) and the set $R$ formed by the following $2k^{2}+(m-1)k$ monoid relations:
\begin{description}
\item[($R_1$)] $a_{i}^2=1$, $1\le i\le k$;
\item[($R_2$)] $b_{i}^m=1$, $1\le i\le k$ ;
\item[($R_3$)] $(b_{i}a_{i})^{m-1}=1$, $1\le i\le k$;
\item[($R_4$)] $(a_{i}b_{i}^{m-1}a_{i}b_{i})^{3}=1$, $1\le i\le k$; 
\item[($R_5$)] $(a_{i}b_{i}^{m-j}a_{i}b_{i}^{j})^{2}=1$, $2\leq j\leq m-2$, $1\le i\le k$;
\item[($R_6$)] $a_ia_j=a_ja_i$, $b_ib_j=b_jb_i$, $1\le i < j\le k$; \quad 
$a_{i}b_{j}=b_{j}a_{i}$, $1\le i,j\le k$, $i\ne j$.
\end{description}
Then, by Proposition \ref{dir}, the monoid $\Sym_{m}^{k}$ is defined by the presentation $\langle A,B\mid R\rangle$.

\smallskip

Now, consider the alphabet $C=\{c_i \mid 1\leq i\leq k\}$ (with $k$ letters) and the set $U$ formed by the following $2k^{2}+(m-1)k$ monoid relations:
 \begin{description}
\item[($U_1$)] $(c_{i}b_{i}^{m-1})^{2}=1$, $1\le i\le k$; 
\item[($U_2$)] $b_{i}^{m}=1$, $1\le i\le k$ ;
\item[($U_3$)] $(b_{i}c_{i}b_{i}^{m-1})^{m-1}=1$, $1\le i\le k$;
\item[($U_4$)] $(c_{i}b_{i}^{m-2}c_{i})^{3}=1$, $1\le i\le k$; 
\item[($U_5$)] $(c_{i}b_{i}^{m-j-1}c_{i}b_{i}^{j-1})^{2}=1$, $2\leq j\leq m-2$, $1\le i\le k$;
\item[($U_6$)] $b_ib_j=b_jb_i$, $c_ic_j=c_jc_i$, $1\le i < j\le k$; \quad 
$b_{i}c_{j}=c_{j}b_{i}$, $1\le i,j\le k$, $i\ne j$.
\end{description}
By Proposition \ref{dir}, the monoid $\Sym_{m}^{k}$ is also defined by the presentation $\langle B,C \mid U\rangle$.

\smallskip

Let us also consider the $k$-letters alphabet $D=\{d_i \mid 1\leq i\leq k\}$. 
Recall that, as elements of $\Sym_{m}^{k}$, we have $d_{i}=b_{i}c_{i+1}$, for $1\le i\le k-1$, and $d_{k}=b_{k}c_{1}$. 
Moreover, $c_{1}=d_{k}^{m}$, 
$c_{i+1}=d_{i}^{m}$, for $1\le i\le k-1$, and $b_{i}=d_{i}^{(m-1)^2}$, for $1\le i\le k$. 
Also, notice that $d_i^{m(m-1)}=1$, whence $b_i^{m-1}=d_i^{(m-1)^3}=d_i^{m-1}$, for $1\le i\le k$. 
By applying Tietze transformations to the previous presentation, it is easy to check that $\Sym_{m}^{k}$ is also defined 
by the presentation $\langle D \mid V\rangle$, where $V$ is formed by the following $2k^{2}+(m-2)k$ monoid relations: 
\begin{description}
\item[($V_1$)] $(d_{k}^{m}d_{1}^{m-1})^{2}=1$;\quad   $(d_{i}^{m}d_{i+1}^{m-1})^{2}=1$, $1\le i\le k-1$;

\item[($V_2$)] $d_{i}^{m(m-1)}=1$, $1\le i\le k$;

\item[($V_3$)] $(d_{1}^{(m-1)^2}d_{k}^{m}d_{1}^{m-1})^{m-1}=1$;\quad   
$(d_{i+1}^{(m-1)^2}d_{i}^{m}d_{i+1}^{m-1})^{m-1}=1$, $1\le i\le k-1$; 

\item[($V_4$)]  $(d_{k}^{m}d_{1}^{(m-1)^{2}(m-2)}d_{k}^{m})^{3}=1$;\quad   
$(d_{i}^{m}d_{i+1}^{(m-1)^{2}(m-2)}d_{i}^{m})^{3}=1$, $1\le i\le k-1$;  

\item[($V_5$)]  $(d_{k}^{m}d_{1}^{(m-1)^{2}(m-j-1)}d_{k}^{m}d_{1}^{(m-1)^{2}(j-1)})^{2}=1$, $2\leq j\leq m-2$; 

$(d_{i}^{m}d_{i+1}^{(m-1)^{2}(m-j-1)}d_{i}^{m}d_{i+1}^{(m-1)^{2}(j-1)})^{2}=1$, $2\leq j\leq m-2$, $1\le i\le k-1$; 

\item[($V_{6}$)] $d_i^{m}d_j^{m}=d_j^{m}d_i^{m},  d_i^{(m-1)^2}d_j^{(m-1)^2}=d_j^{(m-1)^2}d_i^{(m-1)^2}$, $1\le i<j\le k$; 

$d_{i}^{(m-1)^{2}}d_{k}^{m}=d_{k}^{m}d_{i}^{(m-1)^{2}}$, $2\le i\le k-1$; \quad 
$d_{i}^{(m-1)^{2}}d_{j}^{m}=d_{j}^{m}d_{i}^{(m-1)^{2}}$, $1\le i\le k$, $1\le j\le k-1$, $i\not\in\{j, j+1\}$. 
\end{description}

\smallskip 

We move on to the monoid $\POI_{k}$. Let $X=\{x_{i}\mid 0\le i\le k-1\}$ be an alphabet (with $k$ letters).  For $k\ge 2$, let $W$ be the set formed by the following $\frac{1}{2}(k^{2}+5k-4)$ monoid relations:
\begin{description}
\item[($W_1$)] $x_ix_0=x_0x_{i+1}$, $1\leq i\leq k-2$;
\item[($W_2$)] $x_jx_i=x_ix_j$, $2\leq i+1<j\leq k-1$;
\item[($W_3$)] $x_0^2x_1=x_0^2=x_{k-1}x_0^2$;
\item[($W_{4}$)] $x_{i+1}x_ix_{i+1}=x_{i+1}x_i=x_ix_{i+1}x_i$, $1\leq i\leq k-2$;
\item[($W_{5}$)] $x_ix_{i+1}\ldots x_{k-1}x_0x_1\ldots x_{i-1}x_i=x_i$, $0\leq i\leq k-1$;
\item[($W_{6}$)] $x_{i+1}\ldots x_{k-1}x_0x_1\ldots x_{i-1}x_i^2=x_i^2$, $1\leq i\leq k-1$.
\end{description}
The presentation $\langle X\mid W \rangle$ defines the monoid $\POI_{k}$ (see \cite{Fernandes:2001} or \cite{Fernandes:2002b}).

\smallskip  

Finally, we define three sets of relations that envolve the letters from $X$ together with the previous alphabets considered. 
Foremost, let $R'$ be the set formed by the following $2k^{2}+2k$ monoid relations over the alphabet $A\cup B\cup X$: 
\begin{description}
\item[($R'_{1}$)] $a_{1}x_0=x_0$, $b_{1}x_0=x_0$;
\item[($R'_{2}$)] $x_0a_{i}=a_{i+1}x_0$, $x_0b_{i}=b_{i+1}x_0$, $1\leq i\leq k-1$;
\item[($R'_{3}$)] $x_ia_{k-i}=x_i$, $x_ib_{k-i}=x_i$, $0\leq i\leq k-1$;
\item[($R'_{4}$)] $a_{i}x_{k-i+1}=x_{k-i+1}$,  $b_{i}x_{k-i+1}=x_{k-i+1}$, $2\leq i\leq k$;
\item[($R'_{5}$)] $x_ia_{k-i+1}=a_{k-i}x_i$, $x_ib_{k-i+1}=b_{k-i}x_i$, $1\leq i\leq k-1$;
\item[($R'_{6}$)] $x_ia_{j}=a_{j}x_i$, $x_ib_{j}=b_{j}x_i$, $1\leq i\leq k-1$, $1\le j\le k$, $j\notin \{k-i,k-i+1\}$. 
\end{description}
Secondly, consider the set $U'$ formed by the following $2k^{2}+2k$ monoid relations over the alphabet $B\cup C\cup X$: 
\begin{description}
\item[($U'_{1}$)] $c_{1}b_{1}^{m-1}x_0=x_0$, $b_{1}x_0=x_0$;
\item[($U'_{2}$)] $x_0c_{i}b_{i}^{m-1}=c_{i+1}b_{i+1}^{m-1}x_0$, $x_0b_{i}=b_{i+1}x_0$, $1\leq i\leq k-1$;
\item[($U'_{3}$)] $x_ic_{k-i}b_{k-i}^{m-1}=x_i$, $x_ib_{k-i}=x_i$, $0\leq i\leq k-1$;
\item[($U'_{4}$)] $c_{i}b_{i}^{m-1}x_{k-i+1}=x_{k-i+1}$,  $b_{i}x_{k-i+1}=x_{k-i+1}$, $2\leq i\leq k$;
\item[($U'_{5}$)] $x_ic_{k-i+1}b_{k-i+1}^{m-1}=c_{k-i}b_{k-i}^{m-1}x_i$, $x_ib_{k-i+1}=b_{k-i}x_i$, $1\leq i\leq k-1$;
\item[($U'_{6}$)] $x_ic_{j}b_{j}^{m-1}=c_{j}b_{j}^{m-1}x_i$, $x_ib_{j}=b_{j}x_i$, $1\leq i\leq k-1$, $1\le j\le k$, $j\notin \{k-i,k-i+1\}$. 
\end{description}
Lastly, let $V'$ be the set formed by the following $2k^{2}+2k$ monoid relations over the alphabet $D\cup X$: 
\begin{description}
\item[($V'_{1}$)] $d_{k}^{m}d_{1}^{m-1}x_0=x_0$, $d_{1}^{(m-1)^{2}}x_0=x_0$;

\item[($V'_{2}$)] $x_0d_{k}^{m}d_{1}^{m-1}=d_{1}^{m}d_{2}^{m-1}x_0$; \quad 
$x_0d_{i}^{m}d_{i+1}^{m-1}=d_{i+1}^{m}d_{i+2}^{m-1}x_0$, $1\le i\le k-2$;  

$x_0d_{i}^{(m-1)^{2}}=d_{i+1}^{(m-1)^{2}}x_0$, $1\leq i\leq k-1$;

\item[($V'_{3}$)] $x_id_{k-i-1}^{m}d_{k-i}^{m-1}=x_i$, $0\leq i\leq k-2$; \quad $x_{k-1}d_{k}^{m}d_{1}^{m-1}=x_{k-1}$;\quad 
$x_id_{k-i}^{{(m-1)}^{2}}=x_i$, $0\leq i\leq k-1$;
 
\item[($V'_{4}$)] $d_{i}^{m}d_{i+1}^{m-1}x_{k-i}=x_{k-i}$,  $d_{i+1}^{(m-1)^{2}}x_{k-i}=x_{k-i}$, $1\leq i\leq k-1$;

\item[($V'_{5}$)] $x_id_{k-i}^{m}d_{k-i+1}^{m-1}=d_{k-i-1}^{m}d_{k-i}^{m-1}x_i$, $1\leq i\leq k-2$; \quad 
$x_{k-1}d_{1}^{m}d_{2}^{m-1}=d_{k}^{m}d_{1}^{m-1}x_{k-1}$; 

$x_id_{k-i+1}^{(m-1)^{2}}=d_{k-i}^{(m-1)^{2}}x_i$, $1\leq i\leq k-1$; 

\item[($V'_{6}$)] $x_id_{k}^{m}d_{1}^{m-1}=d_{k}^{m}d_{1}^{m-1}x_i$, $1\le i\le k-2$; \quad 
$x_id_{j}^{m}d_{j+1}^{m-1}=d_{j}^{m}d_{j+1}^{m-1}x_i$, $1\leq i,j\leq k-1$, $j\notin \{k-i-1,k-i\}$; 

$x_id_{j}^{(m-1)^{2}}=d_{j}^{(m-1)^{2}}x_i$, $1\leq i\leq k-1$, $1\le j\le k$, $j\notin \{k-i,k-i+1\}$.
\end{description}

\medskip

Clearly, the presentations $\langle A\cup B\cup X\mid R\cup W\cup R' \rangle$, $\langle B\cup C\cup X\mid U\cup W\cup U' \rangle$ and $\langle D\cup X\mid V\cup W\cup V' \rangle$ can be obtained from each other by applying a finite number of Tietze transformations. Therefore,  
by Proposition \ref{tietze}, they define the same monoid. We will prove that they define the monoid $\POI_{k\times m}$ by showing that  
$\langle A\cup B\cup X\mid R\cup W\cup R' \rangle$ is a presentation for $\POI_{k\times m}$ in terms of its generators $\bar A\cup \bar B\cup \bar X$ defined in the previous section. The method described in Proposition \ref{met} will be used. 

\smallskip

Let $f:A\cup B\cup X\longrightarrow \POI_{k\times m}$ be the mapping defined by $a_{i}f=\bar{a}_{i}$, $b_{i}f=\bar{b}_{i}$ and $x_jf=\bar{x}_j$, $i\in\{1,2,\ldots,k\}$, $j\in\{0,1,\ldots,k-1\}$. Let $\varphi:{(A\cup B\cup X)}^*\longrightarrow \POI_{k\times m}$ be the natural homomorphism that extends $f$ to ${(A\cup B\cup X)}^*$. 

\medskip 

It is a routine matter to prove the following lemma: 

\begin{lemma}\label{gerrel} 
The generating set $\bar{A}\cup \bar{B}\cup \bar X$ of $\POI_{k\times m}$ satisfies (via $\varphi$) all the relations from $R\cup W\cup R'$. 
\end{lemma}

Observe that, it follows from the previous lemma that $w_1\varphi=w_2\varphi$, for all $w_1,w_2\in (A\cup B\cup X)^*$ such that 
$w_1=w_2$ is a consequence of $R\cup W\cup R'$. 

\smallskip 

\begin{lemma}\label{twoletters} 
Let $e\in A\cup B$ and $x\in X$. Then, there exists $f \in A\cup B\cup\{1\}$ such that the relation $ex=xf$ is a consequence of $R'$.
\end{lemma}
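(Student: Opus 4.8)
The plan is to prove the statement by direct inspection of the relations in $R'$, which are designed precisely so that any product $ex$ (a letter of $A\cup B$ followed by a letter of $X$) can be rewritten in the form $xf$. Since the $A$-relations and the $B$-relations in $R'$ are perfectly parallel, I would first observe that it suffices to treat the case $e\in A$, say $e=a_j$ with $1\le j\le k$; the case $e\in B$ then follows verbatim by replacing every $a$ with the corresponding $b$. The argument then splits according to which letter $x\in X$ is considered, and within each part according to the index $j$.

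First I would handle $x=x_0$. If $j=1$, relation $(R'_1)$ gives $a_1x_0=x_0$, so one may take $f=1$. If $2\le j\le k$, relation $(R'_2)$ with index $i=j-1$ reads $x_0a_{j-1}=a_jx_0$, whence $a_jx_0=x_0a_{j-1}$ and $f=a_{j-1}$. These two subcases cover every $j\in\{1,\ldots,k\}$.

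Next I would handle $x=x_i$ with $1\le i\le k-1$, splitting on the position of $j$ relative to the two distinguished indices $k-i$ and $k-i+1$ (which are distinct and both lie in $\{1,\ldots,k\}$). If $j=k-i+1$, relation $(R'_4)$, whose index $k-i+1$ indeed lies in $\{2,\ldots,k\}$ for $1\le i\le k-1$, gives $a_{k-i+1}x_i=x_i$, so $f=1$. If $j=k-i$, relation $(R'_5)$ rearranges to $a_{k-i}x_i=x_ia_{k-i+1}$, so $f=a_{k-i+1}$. If $j\notin\{k-i,k-i+1\}$, relation $(R'_6)$ gives $a_jx_i=x_ia_j$, so $f=a_j$. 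Since these three subcases partition $\{1,\ldots,k\}$, every product $a_jx_i$ is accounted for.

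The only real care needed --- and the closest thing to an obstacle --- is the bookkeeping with the index shifts: verifying that each invoked relation genuinely lies in its stated range and that the three subcases for $x_i$ exhaust $\{1,\ldots,k\}$. I would also remark that the relations $(R'_3)$ play no role here, since they have the form $xe'=x$ (the wrong orientation for rewriting a product $ex$), and that in every case $f$ turns out to be a single letter of $A\cup B$ or the empty word, exactly as claimed. No induction or chaining of relations is required: in each case the required instance of $ex=xf$ is, up to the trivial symmetry of a relation, already one of the relations of $R'$, and hence is certainly a consequence of $R'$.
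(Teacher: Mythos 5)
Your proof is correct and follows exactly the paper's (much terser) argument: the paper likewise disposes of the case $x=x_0$ via $(R'_1)$ and $(R'_2)$ and of the case $x=x_i$, $1\le i\le k-1$, via $(R'_4)$, $(R'_5)$ and $(R'_6)$, leaving $(R'_3)$ unused. Your version merely spells out the index bookkeeping and the $A$/$B$ symmetry that the paper leaves implicit.
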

\begin{proof}
The result follows immediately from relations $(R'_1)$ and $(R'_2)$, for $x=x_0$, and from relations $(R'_4)$, $(R'_5)$ and $(R'_6)$, 
for $x\in X\setminus\{x_0\}$.  
\end{proof}

\smallskip 

Let us denote by $|w|$ the length of a word $w\in (A\cup B\cup X)^*$.

\begin{lemma}\label{dec} 
For each $w\in {(A\cup B\cup X)}^*$ there exist $u \in X^*$ and $s \in (A\cup B)^*$ such that the relation $w=us$ is a consequence of $R'$.
\end{lemma}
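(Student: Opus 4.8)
The plan is to use Lemma~\ref{twoletters} as a local rewriting rule that pushes every letter of $X$ to the left of every letter of $A\cup B$. Concretely, whenever a word contains a factor $ex$ with $e\in A\cup B$ and $x\in X$, Lemma~\ref{twoletters} lets me replace it by $xf$ with $f\in A\cup B\cup\{1\}$, and this replacement is a consequence of $R'$. Since consequence is a congruence (stable under left and right multiplication, and transitive), applying such a replacement inside any word $w\equiv w_1(ex)w_2$ yields a word $w_1(xf)w_2$ for which $w=w_1(xf)w_2$ is a consequence of $R'$. A word of the desired shape $us$ with $u\in X^*$ and $s\in(A\cup B)^*$ is exactly one that contains no factor $ex$ of this kind, equivalently one with no occurrence of a letter of $A\cup B$ to the left of a letter of $X$.

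Next I would set up a terminating measure. A naive induction on length fails because in the case $f\in A\cup B$ the replacement $ex\mapsto xf$ preserves length (only when $f=1$ does it shorten the word). To get around this, for a word $w\equiv w_1w_2\cdots w_\ell$ I define the inversion count
$$
N(w)=\#\{(i,j)\mid i<j,\ w_i\in A\cup B,\ w_j\in X\},
$$
the number of pairs of positions at which a letter of $A\cup B$ precedes a letter of $X$. I would check that each application of the rewriting $ex\mapsto xf$ at an \emph{adjacent} inverted pair strictly decreases $N$: the inversion formed by that adjacent pair disappears, while the inversions that $e$ (respectively $x$) formed with letters outside the pair are either transferred unchanged to $f$ or preserved, so the net effect is a decrease by exactly one (and a larger decrease when $f=1$). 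This is the routine bookkeeping I would not spell out in full.

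Then I would argue that the process always proceeds while $N(w)>0$. If $N(w)>0$ there is an inversion $(i,j)$; choosing one with $j-i$ minimal forces $j-i=1$, since otherwise the letter at position $i+1$ lies in $A\cup B$ or in $X$, and in either case one obtains an inversion with a strictly smaller gap, a contradiction. Hence there is always an adjacent factor $ex$ available to rewrite, and since $N$ takes non-negative integer values and strictly decreases at each step, the rewriting terminates after finitely many steps at a word $w'$ with $N(w')=0$. Such a $w'$ has no letter of $A\cup B$ to the left of any letter of $X$, so $w'\equiv us$ with $u\in X^*$ and $s\in(A\cup B)^*$. By transitivity of consequence, $w=us$ is a consequence of $R'$, as required.

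The main obstacle is precisely the length-preserving case $f\in A\cup B$, which rules out a straightforward induction on word length; the inversion count $N(w)$ is the device that repairs this, and the one genuine verification is that $N$ strictly decreases under a single rewriting step applied to an adjacent inverted pair.
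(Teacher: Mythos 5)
Your proof is correct, but it takes a genuinely different route from the paper. You treat Lemma~\ref{twoletters} as a rewriting rule and prove termination via the inversion count $N(w)$, i.e.\ Noetherian induction on the number of pairs where a letter of $A\cup B$ precedes a letter of $X$; your bookkeeping (the adjacent rewrite $ex\mapsto xf$ decreases $N$ by exactly one when $f\in A\cup B$, by $1+q$ when $f=1$, and an inversion of minimal gap is necessarily adjacent) all checks out. The paper instead runs a \emph{complete} (strong) induction on the length of $w$: writing $w\equiv vx$, it applies the induction hypothesis to $v$ to get $v=u_1s_1$, peels off the last letter $e$ of $s_1$ as $s_1\equiv s'e$, uses Lemma~\ref{twoletters} once to turn $ex$ into $xf$, and then applies the induction hypothesis a second time to the strictly shorter word $s'x$. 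One small caveat about your motivating remark: it is only the naive ``one rewrite step must shorten the word'' induction that fails; induction on length itself succeeds when formulated as strong induction, precisely because $|s'x|\le|v|<|w|$, which is how the paper sidesteps the length-preserving case without any auxiliary measure. What your approach buys is a transparent, self-contained termination argument (a bubble-sort picture of pushing $X$-letters leftward); what the paper's buys is brevity, since the double use of the induction hypothesis replaces all of the inversion-count bookkeeping.
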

\begin{proof}
We will prove the lemma by induction on the length of $w\in {(A\cup B\cup X)}^*$.

Clearly, the result is trivial for any $w\in {(A\cup B\cup X)}^*$ such that $|w|\le1$. 

Let $t\ge2$ and, by induction hypothesis, admit the result for all $w\in {(A\cup B\cup X)}^*$ such that $|w|<t$. 

Let $w\in {(A\cup B\cup X)}^*$ be such that $|w|=t$. Then, there exist $v\in {(A\cup B\cup X)}^*$ and $x\in A\cup B\cup X$ 
such that $w\equiv vx$ and $|v|=t-1$. 

Since $|v|<t$, by induction hypothesis, there exist $u_1 \in X^*$ and $s_1 \in (A\cup B)^*$ such that the relation $v=u_1s_1$ is a consequence of $R'$. Hence the relation $w=u_1s_1x$ is also a consequence of $R'$. 

If $|s_1|=0$ or $x\in A\cup B$ then the result is proved. 

So, suppose that $|s_1|\ge1$ and $x\in X$. Let $s'\in (A\cup B)^*$ and $e\in A\cup B$ be such that $s_1\equiv s'e$. 
By Lemma \ref{twoletters}, there exists $f \in A\cup B\cup\{1\}$ such that the relation $ex=xf$ is a consequence of $R'$. 
On the other hand, since $|s'|<|s_1|\le |v|<t$, we have $|s'x|<t$ and so, by induction hypothesis, there exist $u_2 \in X^*$ 
and $s_2 \in (A\cup B)^*$ such that the relation $s'x=u_2s_2$ is a consequence of $R'$. Thus, the relation $w=u_1u_2s_2f$, where $u_1u_2\in X^*$ and $s_2f\in (A\cup B)^*$,  is also a consequence of $R'$, as required. 
\end{proof}

\begin{lemma}\label{uyu} 
For all $j\in\{1,2,\ldots,k\}$ and $u\in X^*$ such that $I_j\nsubseteq \im(u\varphi)$, the relations $ua_{j}=u$ 
and $ub_j=u$ are consequences of $R'$. 
\end{lemma}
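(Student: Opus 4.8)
Let me first understand what Lemma~\ref{uyu} asserts. We have $u \in X^*$, where $X = \{x_0, x_1, \ldots, x_{k-1}\}$ is the alphabet generating $\POI_k$ (under $\varphi$, mapped to the generators $\bar{X}$ of $\POI_n \cap \POI_{k\times m}$). So $u\varphi$ is a transformation in $\POI_n \cap \POI_{k\times m} = \im(\psi)$, i.e. it is $P$-stable and maps intervals to intervals. Its image $\im(u\varphi)$ is a union of intervals $I_j$. The hypothesis is $I_j \not\subseteq \im(u\varphi)$, meaning the block $I_j$ is NOT in the image of $u\varphi$. The conclusion is that appending $a_j$ or $b_j$ (the $\Sym_m$-generators acting on block $j$) does nothing: $ua_j = u$ and $ub_j = u$ are consequences of $R'$.

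**Semantic meaning and why it's true.** Let me think about why this holds. The generator $\bar{a}_j$ (resp. $\bar{b}_j$) is the permutation $a$ (resp. $b$) acting within block $I_j$, and the identity elsewhere. Now $u\varphi$ as an element of $\POI_{k\times m}$ has domain and image that are unions of blocks. Crucially, since $u\varphi \in \im(\psi)$ and $\psi$ sends $\POI_k$ into the part where each block is mapped identically to another block (the $\H$-class representatives with $I_{i_r}\alpha = I_{j_r}$ via the "shift" map, no internal permutation), the restriction of $u\varphi$ to each block is order-preserving. Composing $u\varphi$ with $\bar{a}_j$ on the right means first applying $u\varphi$, then applying $\bar{a}_j$. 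But $\bar{a}_j$ only permutes elements of $I_j$. Since $I_j \not\subseteq \im(u\varphi)$, no point of $\dom(u\varphi)$ is mapped by $u\varphi$ into $I_j$ (because images are whole blocks, so if any point maps into $I_j$ then all of $I_j$ is in the image). Hence $\bar{a}_j$ acts as the identity on $\im(u\varphi)$, giving $u\varphi \bar{a}_j = u\varphi$. So the claim is semantically clear. The work is to derive it from $R'$ syntactically.

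**The plan.** My plan is to prove both relations simultaneously by induction on $|u|$, exploiting the structure of $X$ and the relations $R'$. For the base case, I would check $u = 1$ (empty word): then $\im(u\varphi) = \Omega_n$, which contains every $I_j$, so the hypothesis $I_j \not\subseteq \im(u\varphi)$ is never satisfied and there is nothing to prove. For the inductive step, write $u \equiv x_i v$ or $u \equiv v x_i$; I expect it is cleaner to peel a letter off the \emph{left}, writing $u \equiv x_i u'$ with $u' \in X^*$. The idea is to push $a_j$ (or $b_j$) leftward through $u'$ using the commutation and transfer relations in $R'$, and then absorb it at the generator $x_i$. Concretely, relations $(R'_3)$ give $x_i a_{k-i} = x_i$ and $x_i b_{k-i} = x_i$: these are precisely the relations that kill a $\Sym_m$-generator acting on the block that $x_i$ drops from its image. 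Relations $(R'_5)$ and $(R'_6)$ let one commute $a_\ell$ and $b_\ell$ past $x_i$ (possibly with an index shift), while $(R'_4)$ provides absorption on the other side.

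**Executing the induction and the main obstacle.** The key observation I would use is how $\im((x_i v)\varphi)$ relates to $\im(v\varphi)$. Writing $\bar{x}_i = x_i \varphi$, we have $\im((x_i v)\varphi) = (\im(\bar{x}_i) \cap \text{dom-compatible part}) \cdots$; more usefully, I would track which block gets "deleted" as one reads $u$ from the left. Each $\bar{x}_i$ is a rank-$(k-1)$ idempotent-like generator that removes one block from the image and shifts one neighbouring block. Given the hypothesis $I_j \not\subseteq \im(u\varphi)$, there is a first letter $x_i$ in $u$ (reading left to right through the associated action) responsible for excluding block $j$ from the final image, or block $j$ is never in the image because of the shifting. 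The strategy is: using $(R'_4)$, $(R'_5)$, $(R'_6)$ and Lemma~\ref{twoletters}, transport the generator $a_j$ (resp. $b_j$) from the right end of $u$ leftward through $u'$, updating its index according to the block-permutation induced by the letters it passes, until it reaches a letter $x_i$ with $k-i$ equal to its current index, where $(R'_3)$ annihilates it. The main obstacle I anticipate is bookkeeping the index transformations correctly: each passage of $a_\ell$ past a letter $x_i$ may change $\ell$ (via $(R'_5)$) or leave it fixed (via $(R'_6)$), and I must verify that the hypothesis $I_j \not\subseteq \im(u\varphi)$ guarantees that this leftward journey always terminates at a matching $x_i$ before running off the left end of the word — equivalently, that the index never collides with the "active shift" in a way that would instead invoke $(R'_4)$ prematurely. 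I would handle this by carefully correlating the combinatorial action of $\bar{x}_i$ on blocks (deletion and shift of intervals) with the index arithmetic in $(R'_3)$–$(R'_6)$, reducing to the inductive hypothesis applied to $u'$ with the appropriately transformed index.
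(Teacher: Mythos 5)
Your semantic analysis is correct and your toolkit is the right one: the paper's own proof is indeed an induction on $|u|$ in which $a_j$ (resp.\ $b_j$) is moved leftward through $u$ via $(R'_2)$, $(R'_5)$, $(R'_6)$, with its index updated each time it crosses a letter, until $(R'_3)$ absorbs it. However, as written your argument has a genuine gap, in two respects. First, your induction is anchored at the wrong end of the word. You write $u\equiv x_iu'$ and propose to reduce to the inductive hypothesis ``applied to $u'$ with the appropriately transformed index''; but the hypothesis $I_j\nsubseteq\im(u\varphi)$ does \emph{not} pass to the suffix $u'$: take $k=2$, $u\equiv x_1x_1$ and $j=2$; then $\im(u\varphi)=\emptyset$, so the hypothesis holds for $u$, while $\im(u'\varphi)=\im(\bar x_1)=I_2$, so it fails for $u'$ with the same index --- and the ``transformed'' index appropriate to $u'$ cannot be read off from the letter $x_i$ you peeled, because the index transformation is governed by the \emph{last} letter the generator crosses, not the first. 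The induction must therefore peel the rightmost letter, $u\equiv vx_i$ (as the paper does): one crossing $x_ia_j=a_{j'}x_i$ is performed explicitly, and the inductive hypothesis is then applied to the prefix $v$ with the explicitly computed $j'$.

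Second --- and this is the heart of the matter --- the step you defer as ``the main obstacle'' is precisely the mathematical content of the lemma, and it is never carried out. What must be shown is that after the crossing the hypothesis is restored, i.e.\ $I_{j'}\nsubseteq\im(v\varphi)$; this is what guarantees that the leftward journey ends in absorption rather than running off the left end of the word. The verification is a short case analysis using the block action of $\bar x_i$: if $i=0$ and $1\le j\le k-1$, then $j'=j+1$, and $I_{j+1}\subseteq\im(v\varphi)$ would give $I_j=I_{j+1}\bar x_0\subseteq\im(u\varphi)$, a contradiction; if $1\le i\le k-1$ and $j=k-i+1$, then $j'=k-i$, and $I_{k-i}\subseteq\im(v\varphi)$ would give $I_j=I_{k-i}\bar x_i\subseteq\im(u\varphi)$, a contradiction; if $j\notin\{k-i,k-i+1\}$, then $j'=j$, and $I_j\subseteq\im(v\varphi)$ would give $I_j=I_j\bar x_i\subseteq\im(u\varphi)$, a contradiction; and if $j=k-i$, then $(R'_3)$ absorbs immediately, closing the induction. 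Without this (or an equivalent termination argument) your proposal is a plan rather than a proof; with it, and with the induction re-anchored at the right end, it becomes exactly the paper's proof.
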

\begin{proof}
We will prove the lemma for relations of the form $ua_j=u$, with $1\le j\le k$, by induction on the length of $u \in X^*$. 
For relations of the form $ub_j=u$, with $1\le j\le k$, the proof is analogous. 

Let $j\in\{1,2,\ldots,k\}$ and let $u\in X^*$ be such that $I_j\nsubseteq \im(u\varphi)$ and $|u|=1$. 
Then $u\equiv x_i$, for some $0\le i\le k-1$. As $I_j\nsubseteq \im(x_i\varphi)=\im(\bar{x}_i)$, 
by definition of $\bar{x}_i$, we have $j=k-i$. Hence, the relation $ua_j=u$, i.e. $x_ia_{k-i}=x_i$, is one of the relations $(R'_{3})$.

Next, let $t\ge1$ and, by induction hypothesis, admit that for all 
$j\in\{1,2,\ldots,k\}$ and $u\in X^*$ such that $I_j\nsubseteq \im(u\varphi)$ and $|u|=t$, the relation $ua_{j}=u$ 
is a consequence of $R'$.

Let $j\in\{1,2,\ldots,k\}$ and let $u\in X^*$ be such that $I_j\nsubseteq \im(u\varphi)$ and $|u|=t+1$. Let $\bar u=u\varphi$. 

Take $v\in X^*$ and $i\in\{0,1,\ldots,k-1\}$ such that $u\equiv vx_i$. Observe that $|v|=t$. Let $\bar v=v\varphi$. 
Hence $\bar u=\bar v\bar x_i$. 

First, consider $i=0$. If $j=k$ then $x_0a_k=x_0$ is a relation from $(R'_3)$ and so $vx_0a_k=vx_0$, 
i.e. $ua_j=u$, is a consequence of $R'$. On the other hand, suppose that $1\le j\le k-1$. 
If $I_{j+1}\subseteq\im(\bar v)$ then, as $I_{j+1}\bar x_0=I_j$, we obtain $I_j\subseteq \im(\bar v\bar x_0)=\im(\bar u)$, which is a contradiction. 
Hence, $I_{j+1}\nsubseteq\im(\bar v)$ and so, by induction hypothesis, the relation $va_{j+1}=v$ is a consequence of $R'$. 
Since $x_0a_j=a_{j+1}x_0$ is a relation from $(R'_2)$, we deduce $ua_j\equiv vx_0a_j=va_{j+1}x_0=vx_0\equiv u$, as a consequence of $R'$.

Now, suppose that $1\le i\le k-1$.  

If $j=k-i$ then $x_ia_j=x_i$ is a relation from $(R'_3)$ and so $vx_ia_j=vx_i$, 
i.e. $ua_j=u$, is a consequence of $R'$. 

Next, let $j=k-i+1$. 
If $I_{j-1}\subseteq\im(\bar v)$ then, as $I_{j-1}\bar x_i=I_{k-i}\bar x_i=I_j$, we have $I_j\subseteq \im(\bar v\bar x_i)=\im(\bar u)$, 
which is a contradiction. 
Hence, $I_{j-1}\nsubseteq\im(\bar v)$ and so, by induction hypothesis, the relation $va_{j-1}=v$ is a consequence of $R'$. 
Since $x_ia_j=a_{j-1}x_i$ is a relation from $(R'_5)$, we obtain $ua_j\equiv vx_ia_j=va_{j-1}x_i=vx_i\equiv u$, as a consequence of $R'$.

Finally, admit that $j\not\in\{k-i,k-i+1\}$. 
If $I_{j}\subseteq\im(\bar v)$ then, as $I_{j}\bar x_i=I_j$, we get $I_j\subseteq \im(\bar v\bar x_i)=\im(\bar u)$, 
which is a contradiction. 
Hence, $I_{j}\nsubseteq\im(\bar v)$ and so, by induction hypothesis, the relation $va_{j}=v$ is a consequence of $R'$ and so 
$ua_j\equiv vx_ia_j=va_{j}x_i=vx_i\equiv u$ are also consequences of $R'$, as required. 
\end{proof} 

\begin{lemma}\label{fcan} 
For each $w\in {(A\cup B\cup X)}^*$ there exist $u \in X^*$ and $s \in (A\cup B)^*$ such that the relation $w=us$ is a consequence of $R\cup R'$ and $\ell(s\varphi)=\ell$, for all $\ell\in\Omega_n\setminus\im(u\varphi)$.
\end{lemma}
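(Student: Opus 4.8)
The plan is to refine the decomposition $w=us$ produced by Lemma~\ref{dec} so that the group part $s$ no longer touches any block outside $\im(u\varphi)$. Concretely, I would start from a decomposition $w=us_0$ (with $u\in X^*$ and $s_0\in(A\cup B)^*$) that is a consequence of $R'$, as granted by Lemma~\ref{dec}, and then delete from $s_0$ every letter that acts on an ``irrelevant'' block, absorbing those letters into $u$ by means of Lemma~\ref{uyu}.

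First I would record the structural fact that makes ``relevant'' and ``irrelevant'' well defined. Since $u\varphi\in\im(\psi)=\POI_n\cap\POI_{k\times m}$, the set $\im(u\varphi)$ is a union of blocks $I_j$; hence, writing $G=\{\,j\in\{1,\dots,k\}\mid I_j\subseteq\im(u\varphi)\,\}$, every index $j\notin G$ satisfies $I_j\cap\im(u\varphi)=\emptyset$. Call a letter $a_j$ or $b_j$ \emph{relevant} when $j\in G$ and \emph{irrelevant} otherwise. A relevant and an irrelevant letter necessarily carry distinct indices, so they commute by the relations $(R_6)$.

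Next I would sort $s_0$ using those commutations. By repeated application of $(R_6)$ (hence as a consequence of $R$) the word $s_0$ can be rewritten as $s_0=t\,r$, where $t\in(A\cup B)^*$ is the product of all irrelevant letters and $r\in(A\cup B)^*$ the product of all relevant ones. Every letter of $t$ is of the form $a_j$ or $b_j$ with $I_j\not\subseteq\im(u\varphi)$, so Lemma~\ref{uyu} gives $ua_j=u$ and $ub_j=u$ as consequences of $R'$; applying this to the leftmost letter of $t$ leaves a word of the same shape with the same $u\in X^*$, so iterating yields $ut=u$ as a consequence of $R'$. Therefore $w=us_0=utr=ur$ is a consequence of $R\cup R'$, and I would set $s:=r$.

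It remains to verify the image condition, which is now immediate: each letter of $s=r$ is $a_j$ or $b_j$ with $I_j\subseteq\im(u\varphi)$, so $s\varphi$ is a product of the block permutations $\bar a_j,\bar b_j$ that permute points only inside blocks contained in $\im(u\varphi)$ and fix every point outside those blocks; in particular $\ell(s\varphi)=\ell$ for every $\ell\in\Omega_n\setminus\im(u\varphi)$. The only point requiring care is the bookkeeping just described---checking that relevant and irrelevant indices are genuinely disjoint (which uses that $\im(u\varphi)$ is a full union of blocks) and that each deletion via Lemma~\ref{uyu} restores a word $u\in X^*$ to which the lemma applies again. Everything else is a direct combination of Lemmas~\ref{dec} and~\ref{uyu}, so I expect no substantial obstacle.
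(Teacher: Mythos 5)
Your proposal is correct and takes essentially the same approach as the paper: both start from the decomposition of Lemma~\ref{dec}, use the commutations $(R_6)$ to move letters $a_j,b_j$ with $I_j\nsubseteq\im(u\varphi)$ leftward, and absorb them into $u$ via Lemma~\ref{uyu}, keeping $u$ fixed throughout. The only difference is organizational --- the paper chooses $s$ of minimal length and derives a contradiction from any occurrence of such a letter, whereas you sort all of them to the front and delete them constructively; the underlying mechanism is identical.
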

\begin{proof}
Let $u \in X^*$ and $s_0 \in (A\cup B)^*$ be such that the relation $w=us_0$ is a consequence of $R'$ (by applying Lemma \ref{dec}). 
Hence, we may take $s \in (A\cup B)^*$ such that $w=us$ is a consequence of $R\cup R'$ and $s$ has minimum length among all 
$s' \in (A\cup B)^*$ such that $w=us'$ is a consequence of $R\cup R'$.  

Let $\ell\in\Omega_n\setminus\im(u\varphi)$. Then $\ell\in I_j$, for some $1\le j\le k$. 

Suppose that $a_j$ or $b_j$ occur in $s$. 
Let $s_1\in((A\cup B)\setminus\{a_j,b_j\})^*$, $y\in\{a_j,b_j\}$ and $s_2 \in (A\cup B)^*$ be such that $s\equiv s_1ys_2$. 
Then, clearly, the relation $s_1y=ys_1$ is a consequence of relations from $(R_6)$ and so the relation $s=ys_1s_2$ is a consequence of $R$. 
On the other hand, as $I_j\nsubseteq\im(u\varphi)$, by Lemma \ref{uyu}, the relation $uy=u$ is a consequence of $R'$, whence the relation $w=us_1s_2$ is a consequence of $R\cup R'$,  $s_1s_2 \in (A\cup B)^*$ and $|s_1s_2|=|s|-1$, which is a contradiction. 

Therefore, $a_j$ and $b_j$ do not occur in $s$ and so the restriction of $s\varphi$ to $I_j$ is the identity of $I_j$. 
In particular, $\ell(s\varphi)=\ell$, as required. 
\end{proof}

Let $u\in X^*$ and $s\in (A\cup B)^*$. Notice that, as $s\varphi\in\G_{k\times m}$, then $I_j(s\varphi)=I_j$, for all $1\le j\le k$, and so 
$\dom((us)\varphi)=\dom(u\varphi)$ and $\im((us)\varphi)=\im(u\varphi)$. 

\smallskip 

We are now in a position to prove our last lemma. 

\begin{lemma}\label{apres} 
Let $w_1,w_2\in (A\cup B\cup X)^*$. 
If $w_1\varphi=w_2\varphi$ then $w_1=w_2$ is a consequence of $R\cup W\cup R'$. 
\end{lemma}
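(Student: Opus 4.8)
The plan is to establish that $\varphi$ induces the isomorphism by showing that any two words with the same image under $\varphi$ are connected via the relations. The key reduction is already available: by Lemma \ref{fcan}, every word $w$ is equivalent (modulo $R\cup R'$) to a word in canonical form $us$, where $u\in X^*$, $s\in(A\cup B)^*$, and $s\varphi$ fixes every point outside $\im(u\varphi)$. So first I would apply Lemma \ref{fcan} to both $w_1$ and $w_2$, obtaining $w_1=u_1s_1$ and $w_2=u_2s_2$ as consequences of $R\cup R'$, with $u_1,u_2\in X^*$ and $s_1,s_2\in(A\cup B)^*$ having the stated fixing property. It then suffices to show $u_1s_1=u_2s_2$ is a consequence of $R\cup W\cup R'$.

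Next I would analyse what the hypothesis $w_1\varphi=w_2\varphi$ tells us about these canonical forms. Since $(u_is_i)\varphi=u_i\varphi\cdot s_i\varphi$ with $s_i\varphi\in\G_{k\times m}$, the remark preceding this lemma gives $\dom((u_is_i)\varphi)=\dom(u_i\varphi)$ and $\im((u_is_i)\varphi)=\im(u_i\varphi)$. Hence $u_1\varphi$ and $u_2\varphi$ share the same domain and image, so they are $\H$-related idempotents-patterned elements of $\POI_n\cap\POI_{k\times m}=\im(\psi)$; in fact $u_1\varphi=u_2\varphi$ (both are the unique element of $\langle\bar X\rangle$ with that domain and image, since $\psi$ is injective and $\POI_n$ elements are determined by domain, image and the order-preserving block matching). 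The plan is to first settle the purely $X$-part: because $\langle X\mid W\rangle$ is a presentation of $\POI_k$ and $\psi$ is an isomorphism onto $\im(\psi)$, the equality $u_1\varphi=u_2\varphi$ forces $u_1=u_2$ to be a consequence of $W$. After rewriting with this, we are reduced to comparing $u s_1$ and $u s_2$ with a common $u$.

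For the group part, the situation is now that $u\varphi\cdot s_1\varphi=u\varphi\cdot s_2\varphi$ with $s_1\varphi,s_2\varphi\in\G_{k\times m}\cong\Sym_m^k$. The elements $s_1\varphi$ and $s_2\varphi$ need not be equal as elements of $\Sym_m^k$ — they may differ on the blocks $I_j$ with $I_j\not\subseteq\dom(u\varphi)$, since such blocks are killed by $u\varphi$ on the left. However, by the fixing property from Lemma \ref{fcan}, both $s_1\varphi$ and $s_2\varphi$ act as the identity on every block $I_j$ with $I_j\not\subseteq\im(u\varphi)$; and since $\dom(u\varphi)$ and $\im(u\varphi)$ consist of the same number of blocks matched order-preservingly, the relevant blocks coincide. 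I would argue that $s_1\varphi$ and $s_2\varphi$ must therefore agree on all blocks that matter, i.e. the components indexed by $\dom(u\varphi)$ coincide, while on the remaining components both are the identity by the fixing property. Consequently $s_1\varphi=s_2\varphi$ as elements of $\Sym_m^k$, and since $\langle A,B\mid R\rangle$ presents $\Sym_m^k$, the relation $s_1=s_2$ is a consequence of $R$. Combining, $w_1=u_1s_1=us_1=us_2=u_2s_2=w_2$ is a consequence of $R\cup W\cup R'$.

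The main obstacle I anticipate is the careful bookkeeping in the group part: making precise why the two canonical-form suffixes $s_1,s_2$ yield the same element of $\Sym_m^k$. The subtlety is that a word in $(A\cup B)^*$ maps to an element of $\Sym_m^k$, but the information we actually control is its action on the blocks inside $\im(u\varphi)$ together with the identity-fixing condition outside; one must verify that these two pieces of data determine all $k$ components of $\Sym_m^k$, using that the fixing property of Lemma \ref{fcan} pins down exactly the components outside $\im(u\varphi)$ to be trivial. Once the equality $s_1\varphi=s_2\varphi$ in $\Sym_m^k$ is secured, invoking the presentation $\langle A,B\mid R\rangle$ of $\Sym_m^k$ is routine, and the $X$-part is handled cleanly through the isomorphism $\psi$ and the presentation $\langle X\mid W\rangle$ of $\POI_k$.
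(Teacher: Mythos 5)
Your proposal is correct and follows essentially the same route as the paper's proof: reduce both words to the canonical form of Lemma \ref{fcan}; deduce $u_1\varphi=u_2\varphi$ from equality of domains and images (elements of $\POI_n\cap\POI_{k\times m}$ are determined by these data) and hence $u_1=u_2$ as a consequence of $W$ via the presentation of $\POI_k$; then deduce $s_1\varphi=s_2\varphi$ and hence $s_1=s_2$ as a consequence of $R$ via the presentation of $\Sym_m^k$. One local slip in your group-part bookkeeping: the blocks on which the equality $u\varphi\cdot s_1\varphi=u\varphi\cdot s_2\varphi$ gives no information about the $s_i\varphi$ are those $I_j\not\subseteq\im(u\varphi)$, not those outside $\dom(u\varphi)$, and your bridging claim that ``the relevant blocks coincide'' is false in general (take, say, $\dom(u\varphi)=I_1\cup I_2$ and $\im(u\varphi)=I_2\cup I_3$). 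That claim is also unnecessary: agreement of $s_1\varphi$ and $s_2\varphi$ on blocks inside $\im(u\varphi)$ follows pointwise from the composite equality, and the fixing property of Lemma \ref{fcan} forces both to be the identity on exactly the remaining blocks --- which is the formulation your closing paragraph states correctly, and is precisely the paper's argument.
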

\begin{proof}
By Lemma \ref{fcan} we can consider $u_1,u_2 \in X^*$ and $s_1,s_2 \in (A\cup B)^*$ such that the relations $w_1=u_1s_1$ and $w_2=u_2s_2$ are consequences of $R\cup R'$, $\ell(s_1\varphi)=\ell$, for all $\ell\in\Omega_n\setminus\im(u_1\varphi)$, and 
$\ell(s_2\varphi)=\ell$, for all $\ell\in\Omega_n\setminus\im(u_2\varphi)$. 

Observe that 
$
\dom(u_1\varphi)=\dom((u_1s_1)\varphi)= \dom(w_1\varphi)=\dom(w_2\varphi)=\dom((u_2s_2)\varphi)=\dom(u_2\varphi)
$
and 
$
\im(u_1\varphi)=\im((u_1s_1)\varphi)= \im(w_1\varphi)=\im(w_2\varphi)=\im((u_2s_2)\varphi)=\im(u_2\varphi). 
$
Since $u_1\varphi,u_2\varphi\in\POI_{k\times m}\cap \POI_n$, it follows that $u_1\varphi=u_2\varphi$. 
On the other hand, $\POI_{k\times m}\cap \POI_n\simeq \POI_k$ and 
the monoid $\POI_k$ is defined by the presentation $\langle B\mid W\rangle$. 
Therefore, the relation $u_1=u_2$ is a consequence of $W$. 
 
Next, we turn our attention to $s_1\varphi, s_2\varphi\in \G_{k\times m}$. 
Let $\ell\in\Omega_n\setminus\im(u_1\varphi)=\Omega_n\setminus\im(u_2\varphi)$. 
Then $\ell(s_1\varphi)=\ell=\ell(s_2\varphi)$. 
On the other hand, let $\ell\in\im(u_1\varphi)=\im(u_2\varphi)$. Take $t\in\Omega_n$ such that $t(u_1\varphi)=\ell$. 
Then $\ell(s_1\varphi)=(t(u_1\varphi))(s_1\varphi)=t((u_1\varphi)(s_1\varphi))=t((u_1s_1)\varphi)=
t(w_1\varphi)=t(w_2\varphi)=t((u_2s_2)\varphi)=t((u_2\varphi)(s_2\varphi))=(t(u_2\varphi))(s_2\varphi)=\ell(s_2\varphi)$. 
Hence $s_1\varphi=s_2\varphi$. 
Since $\G_{k\times m}\simeq\Sym_{m}^{k}$ and $\Sym_m^k$ is defined by the presentation $\langle A,B\mid R\rangle$, 
it follows that the relation $s_1=s_2$ is a consequence of $R$. 

Thus, the relation $u_1s_1=u_2s_2$ is a consequence of $R\cup W$ and so 
the relation $w_1=w_2$ is a consequence of $R\cup W\cup R'$, as required. 
 \end{proof}

In view of Proposition \ref{met}, it follows immediately from Lemmas \ref{gerrel} and \ref{apres}: 

\begin{theorem}
For $k,m\ge2$, the monoid $\POI_{k\times m}$ is defined by the presentation $\langle A\cup B\cup X\mid R\cup W\cup R' \rangle$ on $3k$ generators and $\frac{1}{2}(9k^2+(2m+7)k-4)$ relations.
\end{theorem}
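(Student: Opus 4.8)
The plan is to invoke the standard presentation criterion of Proposition \ref{met}, whose two hypotheses have already been secured by the preceding lemmas. First I would observe that Lemma \ref{gerrel} is exactly condition (1) of Proposition \ref{met}: the generating set $\bar{A}\cup\bar{B}\cup\bar X$ satisfies (via $\varphi$) every relation of $R\cup W\cup R'$. Next, since $\varphi:(A\cup B\cup X)^*\longrightarrow\POI_{k\times m}$ is the homomorphic extension of the map sending each letter to the corresponding generator, the statement ``the generating set satisfies the relation $w_1=w_2$'' is precisely the equality $w_1\varphi=w_2\varphi$. Hence Lemma \ref{apres} delivers exactly condition (2): whenever $w_1,w_2\in(A\cup B\cup X)^*$ satisfy $w_1\varphi=w_2\varphi$, the relation $w_1=w_2$ is a consequence of $R\cup W\cup R'$. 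With both clauses of Proposition \ref{met} in hand, it follows at once that $\langle A\cup B\cup X\mid R\cup W\cup R'\rangle$ is a presentation for $\POI_{k\times m}$.

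It then remains only to tally the combinatorial data. The alphabets $A$, $B$ and $X$ each consist of $k$ letters, giving $3k$ generators. For the relations, $R$ contributes $2k^2+(m-1)k$, the set $W$ contributes $\frac{1}{2}(k^2+5k-4)$, and $R'$ contributes $2k^2+2k$; adding these three counts yields $\frac{1}{2}(9k^2+(2m+7)k-4)$, matching the stated figure. Both totals are recorded earlier in the section, so this step is a routine arithmetic check.

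Since essentially all the substance resides in the two cited lemmas, I do not expect any genuine obstacle at this stage: the argument is a direct appeal to Proposition \ref{met} followed by an elementary count. The real difficulty of the development lies upstream, in Lemma \ref{apres}, where an arbitrary word is reduced to the canonical form $us$ with $u\in X^*$ and $s\in(A\cup B)^*$ (via Lemmas \ref{dec} and \ref{fcan}), after which the $\POI_k$-part $u$ is matched using $W$ and the $\Sym_m^k$-part $s$ is matched using $R$. Once that separation is established, the present theorem is merely its packaging.
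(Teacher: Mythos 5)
Your proposal is correct and matches the paper's own proof, which likewise obtains the theorem as an immediate consequence of Proposition \ref{met} combined with Lemmas \ref{gerrel} and \ref{apres}, the generator and relation counts ($3k$ and $2k^2+(m-1)k+\frac{1}{2}(k^2+5k-4)+2k^2+2k=\frac{1}{2}(9k^2+(2m+7)k-4)$) being a routine tally. Your closing remark is also accurate: the substance lies upstream in Lemma \ref{apres} and its supporting Lemmas \ref{dec} and \ref{fcan}, and the theorem itself is just the packaging.
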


In view of Proposition \ref{tietze}, as corollaries of the previous theorem, we also have:  

\begin{theorem}
For $k,m\ge2$, the monoid $\POI_{k\times m}$ is defined by the presentation $\langle B\cup C\cup X\mid U\cup W\cup U' \rangle$ on $3k$ generators and 
$\frac{1}{2}(9k^2+(2m+7)k-4)$ relations.
\end{theorem}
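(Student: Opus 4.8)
The plan is to derive this presentation as a corollary of the preceding theorem by means of Tietze transformations, exactly as signalled in the remark preceding both statements. Since the preceding theorem establishes that $\langle A\cup B\cup X\mid R\cup W\cup R'\rangle$ is a presentation for $\POI_{k\times m}$, and Proposition \ref{tietze} guarantees that two finite presentations related by a finite sequence of elementary Tietze transformations define the same monoid, it suffices to exhibit such a sequence carrying $\langle A\cup B\cup X\mid R\cup W\cup R'\rangle$ into $\langle B\cup C\cup X\mid U\cup W\cup U'\rangle$.

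First I would introduce the new letters $c_i$ together with the defining relations $c_i=a_ib_i$, for $1\le i\le k$ (an ``add a generator'' transformation). Recalling that $a=cb^{m-1}$ at the level of $\Sym_m$, and that relation $(R_2)$ gives $b_i^m=1$, the relation $a_i=c_ib_i^{m-1}$ is a consequence of the current relation set; I would add it and then use it to eliminate each $a_i$ (the ``delete a generator'' transformation), since $c_ib_i^{m-1}$ lies in $(B\cup C\cup X)^*$ and the defining relations for distinct indices do not interfere. Replacing every occurrence of $a_i$ by $c_ib_i^{m-1}$ throughout then leaves the alphabet $B\cup C\cup X$, as desired.

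The bulk of the work, and the only place where any genuine checking is required, is verifying that the relation set obtained after this substitution is Tietze-equivalent to $U\cup W\cup U'$. The relations of $W$ involve only the letters $x_i$ and are untouched, so they carry over verbatim. For the group relations, substituting $a_i\mapsto c_ib_i^{m-1}$ turns $(R_1)$--$(R_3)$ directly into $(U_1)$--$(U_3)$, while $(R_4)$ and $(R_5)$ require reducing exponents modulo $m$ by means of $b_i^m=1$ (for instance $a_ib_i^{m-1}a_ib_i$ collapses to $c_ib_i^{m-2}c_i$, yielding $(U_4)$); the commuting relations $(R_6)$ and $(U_6)$ are interderivable using $c_i=a_ib_i$ together with the pairwise commutativity for distinct indices. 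The mixed relations $R'$ pass to $U'$ block by block under the same substitution, each $(R'_\ell)$ becoming exactly $(U'_\ell)$. I expect this exponent-arithmetic verification to be the main (though entirely routine) obstacle, as one must track the reductions modulo $m$ carefully; it mirrors the Tietze passage from the $\Sym_m$ presentation in $a,b$ to the one in $b,c$ already recorded earlier in the paper.

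Finally I would tally the data: the alphabet $B\cup C\cup X$ has $3k$ letters, and summing $|U|=2k^2+(m-1)k$, $|W|=\tfrac12(k^2+5k-4)$ and $|U'|=2k^2+2k$ gives $\tfrac12(9k^2+(2m+7)k-4)$ relations, matching the count in the statement and confirming that no relation was gained or lost in the transformation.
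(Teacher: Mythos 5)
Your proposal is correct and follows exactly the route the paper takes: the paper also obtains this presentation as a corollary of the theorem for $\langle A\cup B\cup X\mid R\cup W\cup R'\rangle$ by invoking Proposition \ref{tietze}, having already remarked that the three presentations are related by Tietze transformations (your substitution $a_i\mapsto c_ib_i^{m-1}$, the exponent reductions via $b_i^m=1$, and the relation count are precisely the details the paper leaves implicit under its ``Clearly'').
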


\begin{theorem}
For $k,m\ge2$, the monoid $\POI_{k\times m}$ is defined by the presentation $\langle D\cup X\mid V\cup W\cup V' \rangle$ on $2k$ generators and 
$\frac{1}{2}\left(9k^2+(2m+5)k-4\right)$ relations.
\end{theorem}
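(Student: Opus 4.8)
The plan is to obtain this presentation as a corollary of the two presentations already established, using Tietze transformations. Recall that the paper has already remarked that the three candidate presentations $\langle A\cup B\cup X\mid R\cup W\cup R'\rangle$, $\langle B\cup C\cup X\mid U\cup W\cup U'\rangle$ and $\langle D\cup X\mid V\cup W\cup V'\rangle$ can be obtained from one another by a finite sequence of elementary Tietze transformations, and that the first of them has been proved to define $\POI_{k\times m}$ in the preceding theorem. By Proposition~\ref{tietze}, it therefore suffices to exhibit (or invoke the existence of) such a chain of Tietze transformations connecting $\langle A\cup B\cup X\mid R\cup W\cup R'\rangle$ to $\langle D\cup X\mid V\cup W\cup V'\rangle$, and then to count the generators and relations in the latter.

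Concretely, I would proceed in two stages, mirroring the way the sets $U,U'$ and $V,V'$ were introduced earlier in the section. First I would pass from the $A\cup B$ presentation to the $B\cup C$ presentation: since $c_i=a_ib_i$ and $a_i=c_ib_i^{m-1}$ as elements of $\Sym_m^k$, one adds each generating symbol $c_i$ together with the defining relation $c_i=a_ib_i$, then uses it to eliminate each $a_i$ via $a_i=c_ib_i^{m-1}$, rewriting $R$ as $U$ and $R'$ as $U'$ in the process (this is exactly the passage recorded when the presentation $\langle B\cup C\cup X\mid U\cup W\cup U'\rangle$ was set up, yielding the penultimate theorem). Second, I would pass from the $B\cup C$ presentation to the $D$ presentation using $d_i=b_ic_{i+1}$ (indices cyclic) together with the inverse substitutions $c_1=d_k^m$, $c_{i+1}=d_i^m$ and $b_i=d_i^{(m-1)^2}$ already derived in the text: add each $d_i$ with its defining relation, then eliminate every $b_i$ and every $c_i$, which transforms $U$ into $V$ and $U'$ into $V'$. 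Each of these substitutions is a legitimate elementary Tietze transformation because the needed identities hold in the monoid, so the resulting presentation defines the same monoid $\POI_{k\times m}$.

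The remaining task is purely arithmetic: verify the claimed count of $2k$ generators and $\tfrac12(9k^2+(2m+5)k-4)$ relations. The alphabet $D\cup X$ plainly has $2k$ letters. For the relations I would add the sizes already tabulated when $V$, $W$ and $V'$ were defined: the set $V$ contains $2k^2+(m-2)k$ relations (as stated when $\langle D\mid V\rangle$ was introduced), the set $W$ contains $\tfrac12(k^2+5k-4)$ relations, and $V'$ contains $2k^2+2k$ relations. Summing, $2k^2+(m-2)k+\tfrac12(k^2+5k-4)+2k^2+2k$, and collecting the $k^2$, $k$ and constant terms, gives $\tfrac12(9k^2+(2m+5)k-4)$, matching the statement.

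I do not expect any genuine obstacle here, since all the structural work was done in the earlier theorem and in the explicit construction of $V$ and $V'$; the only point requiring care is bookkeeping. In particular the slightly smaller relation count compared with the two $3k$-generator presentations comes from $V$ having $(m-2)k$ rather than $(m-1)k$ ``group'' relations, reflecting that the relations $c_i^{m-1}=1$ (equivalently the $a_i^2=1$ relations of type $(R_1)$) become consequences of the others once everything is written in terms of the single family $d_i$, and this is the one place where I would double-check that no relation is silently lost or double-counted when the substitutions are carried out.
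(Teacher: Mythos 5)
Your proposal is correct and is essentially the paper's own proof: the paper likewise obtains this presentation as a corollary of the theorem on $\langle A\cup B\cup X\mid R\cup W\cup R'\rangle$ via Proposition~\ref{tietze}, having already remarked that the three presentations are interconvertible by Tietze transformations, and your count $2k^2+(m-2)k+\tfrac{1}{2}(k^2+5k-4)+2k^2+2k=\tfrac{1}{2}\left(9k^2+(2m+5)k-4\right)$ is right. One small correction to your closing aside: the $k$ relations saved in passing from $U$ to $V$ are not the relations $c_i^{m-1}=1$ (these never appear in $U$; the paper only notes $c^{m-1}=1$ as a consequence), but rather the $k$ commutation relations $b_ic_{i+1}=c_{i+1}b_i$ of type $(U_6)$, which become trivial identities once both sides are powers of the single letter $d_i$.
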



\bigskip 

\lastpage 

\end{document}